\newtheorem{theorem}{Theorem} 
\newtheorem{proposition}[theorem]{Proposition}
\newcommand{\R}{{\mathbb R}} 
\renewcommand{\L}{{\mathbb L}} 
\renewcommand{\H}{{\mathbb H}} 
\newcommand{\C}{{\mathbb C}} 
\newcommand{\N}{{\mathbb N}} 
\newcommand{\U}{{\mathcal U}}
\newcommand{\E}{{\mathbb E}}
\newcommand{\dd}{{\rm d}}
\newcommand{\PP}{{\mathbb P}}
\DeclareMathOperator{\sech}{sech}
\providecommand{\norm}[1]{\left\lVert#1\right\rVert}
\begin{document}

\title{Exponential integrators for the stochastic Manakov equation}

\author{%
{\sc Andr\'e Berg}}
\address{Department of Mathematics and Mathematical Statistics, 
Ume{\aa} University, 90187 Ume{\aa}, Sweden}
\email{andre.berglund@umu.se}
\author{%
{\sc David Cohen}}
\address{Department of Mathematics and Mathematical Statistics, 
Ume{\aa} University, 90187 Ume{\aa}, Sweden and\\
Department of Mathematical Sciences, 
Chalmers University of Technology and University of Gothenburg, 41296 Gothenburg, Sweden}
\email{david.cohen@chalmers.se}
\author{%
{\sc Guillaume Dujardin}}
\address{Inria Lille Nord-Europe and Laboratoire Paul Painlev\'e UMR CNRS 8524,\\
59650 Villeneuve d'Asq Cedex, France}
\email{guillaume.dujardin@inria.fr}

\maketitle

\begin{abstract}
% Body of abstract:
{This article presents and analyses an exponential integrator for the stochastic Manakov equation, a
system arising in the study of pulse propagation in randomly birefringent optical fibers. We first prove
that the strong order of the numerical approximation is $1/2$ if the nonlinear term in the system is globally
Lipschitz-continuous. Then, we use this fact to prove that the exponential integrator has convergence
order $1/2$ in probability and almost sure order $1/2$, in the case of the cubic nonlinear coupling which
is relevant in optical fibers. Finally, we present several numerical experiments in order to support our
theoretical findings and to illustrate the efficiency of 
the exponential integrator as well as a modified version of it.
}
% Keywords:
{Stochastic partial differential equations. 
Stochastic Manakov equation. 
Coupled system of nonlinear Schr\"odinger equations. 
Numerical schemes. Exponential integrators. Strong convergence. 
Convergence in probability. Almost sure convergence. 
Convergence rates. }
\end{abstract}

{\small\noindent 
{\bf AMS Classification.} 65C30. 65C50. 65J08. 60H15. 60M15. 60-08. 35Q55

\section{Introduction}

Optical fibers play an important role in our modern communication society 
\cite{agrawal2007nonlinear}.
In order to model the light propagation over long distance in randomly varying birefringent optical fibers, 
the Manakov PMD equation was derived from Maxwell's equations in \cite{Wai-1996}. 
As noted in \cite{garnierm06}, polarization mode dispersion (PMD) is one of the main limiting effects  
of high bit rate transmission in optical fiber links. In addition, the work \cite{garnierm06} proves that 
the asymptotic dynamics of the Manakov PMD equation is given by a stochastic nonlinear evolution equation 
in the Stratonovich sense: the stochastic Manakov equation, see below. 
In the present article, we perform a numerical analysis of this stochastic partial differential equation (SPDE).

We now review the literature on the numerical analysis of the stochastic Manakov equation. 
The work \cite{Gazeau:13}, see also \cite{GazeauPhd}, 
numerically studies the impact of noise on Manakov solitons and soliton 
wave-train propagation by the following time integrators: the nonlinearly implicit Crank--Nicolson 
scheme, the linearly implicit relaxation scheme, and a Fourier split-step scheme. For instance, it is conjectured that, 
in the small-noise regime and over short distances, solitons are not strongly destroyed and are stable. 
Reference \cite{MR3166967}, see also \cite{GazeauPhd}, proves that the order of convergence in probability of 
the Crank--Nicolson scheme is $1/2$. On top of that, it is shown that this numerical integrator preserves the $\L^2$-norm 
as does the exact solution to the stochastic Manakov equation. Furthermore, it is numerically 
observed that the almost-sure order of convergence of the relaxation scheme and the split-step 
scheme is $1/2$. 

The main goal of this article is to present and analyse a linearly implicit exponential integrator for the 
time discretisation of the stochastic Manakov equation. Exponential integrators for the time integration 
of deterministic or stochastic (partial) differential equations are nowadays widely used and studied as witnessed 
by the recent works \cite{MR2413146,MR2536086,MR2578878,MR2652783,MR2995211,MR3047942,MR3033008,MR3353942,
  MR3463447,MR3484400,MR3736655,MR3771721,acqs18,MR3663004} and references therein.
Beside having the same orders of convergence as the nonlinearly implicit Crank--Nicolson
scheme from \cite{MR3166967}, 
the proposed exponential integrators offer additional computational advantages as illustrated below.

\section{Setting and notation}

Let $(\Omega, \mathcal{F},\mathbb{P})$ be a probability space on which a three-dimensional 
standard Brownian motion $W(t):=(W_1(t), W_2(t), W_3(t))$ is defined. We endow the probability space with the 
complete filtration $\mathcal{F}_t$ generated by $W(t)$. 
In the present paper, we consider the nonlinear stochastic Manakov system \cite{MR3166967}
\begin{equation}
  \label{eq:Manakov}
  i{\rm d}X
+\partial^2_x X\, {\rm d}t
+ |X|^2 X\, {\rm d}t
+ i \sqrt{\gamma} \sum_{k=1}^{3} \sigma_k \partial_x X \circ {\rm d} W_k
 = 0,
\end{equation}
where $X=X(t,x)=(X^1,X^2)$ is the unknown function with values in $\C^2$ with $t\geq0$ and $x\in\R$, 
the symbol $\circ$ denotes the Stratonovich product,
$\gamma\geq 0$ measures the intensity of the noise,
$|X|^2=|X^1|^2+|X^2|^2$ is the nonlinear coupling,
and $\sigma_1$, $\sigma_2$ and $\sigma_3$ are the
classical Pauli matrices defined by
\begin{equation*}
  \sigma_1=
  \begin{pmatrix}
    0 & 1\\
    1 & 0
  \end{pmatrix},
  \qquad
  \sigma_2=
  \begin{pmatrix}
    0 & -i\\
    i & 0
  \end{pmatrix},
  \quad
  {\rm and}
  \quad
  \sigma_3=
  \begin{pmatrix}
    1 & 0\\
    0 & -1
  \end{pmatrix}.
\end{equation*}
The mild form of the stochastic Manakov equation reads 
\begin{equation}
\label{mildManakov}
X(t)=U(t,0)X^0+i\int_0^tU(t,s)F(X(s))\,\text ds, 
\end{equation}
where $X^0$ denotes the initial value of the problem, $U(t,s)$ for $t\geq s$ with $s,t\in\R_+$ is the random unitary propagator defined as the unique solution 
to the linear part of \eqref{eq:Manakov}, and $F(X)=|X|^2X$.

Let $p\geq1$. We define $\L^p:=\L^p(\R):=(L^p(\R;\C))^2$ the Lebesgue spaces of functions with values in $\C^2$. 
We equip $\L^2$ with the real scalar product $\displaystyle (u,v)_2=\sum_{j=1}^2\mathrm{Re}\left(\int_\R u_j\overline{v}_j\,\text dx\right)$. 
Further, for $m\in\N$, we denote $\H^m:=\H^m(\R)$ the space of functions in $\L^2$ with their $m$ first derivatives in $\L^2$. 
The norm in $\H^m$ is denoted by $\norm{\cdot}_{m}=\norm{\cdot}_{\H^m}=\norm{\cdot}_{\H^m(\R)}$.

We now recall the local existence and uniqueness result for solutions to \eqref{eq:Manakov} obtained in \cite{MR3024974} (see also \cite{GazeauPhd}).
\begin{theorem}[Theorem~1.2 in \cite{MR3024974}]\label{thmExact}
Consider the initial value $X^0\in\H^1$, then there exists a maximal stopping time $\tau^*(X^0,\omega)$ and a unique solution $X$ (in the probabilistic sense) to 
\eqref{eq:Manakov} such that $X\in C([0,\tau^*[,\H^1)$ $\mathbb{P}$-a.s. Furthermore, the $\L^2$ norm is almost surely preserved: 
$\norm{X(t)}_{\L^2}=\norm{X^0}_{\L^2}$ 
for $t\in[0,\tau^*[$. Moreover, the following alternative holds for the maximal existence time of solutions to \eqref{eq:Manakov}:
$$
\tau^*(X^0,\omega)=+\infty\quad\text{or}\quad \limsup_{t\nearrow\tau^*(X^0,\omega)}\norm{X(t)}_{\H^1}=+\infty.
$$
Finally, if the initial value $X^0$ belongs to $\H^m$ for some $m\geq1$, then the corresponding solution also belongs to $\H^m$ almost surely.
\end{theorem}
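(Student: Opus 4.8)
The statement collects four assertions --- local existence and uniqueness, conservation of the $\L^2$-norm, the blow-up alternative, and persistence of higher regularity --- and I would establish them along the lines of de Bouard--Debussche for nonlinear Schr\"odinger equations with multiplicative noise; the argument is lighter here because, as the mild form \eqref{mildManakov} shows, once the linear random propagator $U(t,s)$ is understood no stochastic integral remains and \eqref{mildManakov} is a \emph{pathwise} integral equation with random coefficients. The first step is therefore to analyse $U(t,s)$: taking the spatial Fourier transform turns the linear part of \eqref{eq:Manakov} into the $\xi$-indexed family of finite-dimensional Stratonovich SDEs $\dd\widehat X(t,\xi)=-i\xi^2\widehat X(t,\xi)\,\dd t-i\sqrt{\gamma}\,\xi\sum_{k=1}^3\sigma_k\widehat X(t,\xi)\circ\dd W_k$. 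Since $-i\xi^2 I$ is scalar and each $-i\sqrt{\gamma}\,\xi\sigma_k$ is anti-Hermitian (because $\sigma_k$ is Hermitian) and traceless, the solution factorises as $\widehat X(t,\xi)=e^{-i\xi^2 t}M(t,\xi)\widehat X^0(\xi)$, with $M(t,\cdot)$ the $SU(2)$-valued random matrix solving $\dd M=-i\sqrt{\gamma}\,\xi\sum_k\sigma_k M\circ\dd W_k$, $M(0,\xi)=I$; in particular $M(t,\xi)$ is unitary for every $\xi$. As multiplication by $(i\xi)^m$ (i.e.\ $\partial_x^m$) commutes with the $\C^2$-action of $M(t,\xi)$, Plancherel's identity shows that $U(t,s)$ extends, for a.e.\ $\omega$, to a family of \emph{isometries} of $\H^m$ for every $m\ge 0$ --- the benefit of the linear part having $x$-independent coefficients --- while standard SDE theory for $M$ gives joint measurability, adaptedness, strong continuity in $(t,s)$ and the cocycle identity $U(t,s)U(s,r)=U(t,r)$.

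Next I would prove local well-posedness by a fixed-point argument on \eqref{mildManakov}. In one space dimension $\H^1\hookrightarrow\L^\infty$ and $\H^m$ is a Banach algebra for $m\ge 1$, so $F(X)=|X|^2X$ maps $\H^1$ into itself with $\norm{F(X)-F(Y)}_1\lesssim(\norm X_1^2+\norm Y_1^2)\norm{X-Y}_1$, hence is locally Lipschitz. I would truncate: for $R>0$ set $F_R(X)=\theta(\norm X_1/R)F(X)$ with $\theta\in C_c^\infty(\R_+)$, $\theta\equiv 1$ on $[0,1]$, so that $F_R:\H^1\to\H^1$ is globally Lipschitz with constant $L_R$. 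Because $\norm{U(t,s)}_{\H^1\to\H^1}=1$, the mild map $X\mapsto U(\cdot,0)X^0+i\int_0^\cdot U(\cdot,s)F_R(X(s))\,\dd s$ is a contraction of $C([0,T];\H^1)$ whenever $T<1/L_R$, \emph{pathwise} and with no exceptional set beyond the one of the first step; chaining such intervals yields a unique global adapted mild solution $X_R\in C([0,\infty);\H^1)$, $\PP$-a.s. Setting $\tau_R:=\inf\{t\ge 0:\norm{X_R(t)}_1\ge R\}$, pathwise uniqueness gives $X_R=X_{R'}$ on $[0,\tau_R]$ for $R\le R'$, so $\tau^*:=\sup_R\tau_R$ is a stopping time and $X:=X_R$ on $[0,\tau_R)$ is the unique maximal solution. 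On $\{\tau^*<\infty\}$ one has $\tau_R<\tau^*$ for every $R$ and, by continuity, $\norm{X(\tau_R)}_1=R$, which forces $\limsup_{t\nearrow\tau^*}\norm{X(t)}_1=+\infty$ --- the stated alternative.

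For the $\L^2$-conservation, the Stratonovich chain rule applied to a smooth solution gives $\dd\norm{X(t)}_{\L^2}^2=2\,(X,i\partial_x^2X)_2\,\dd t+2\,(X,i|X|^2X)_2\,\dd t-2\sqrt{\gamma}\sum_k(X,\sigma_k\partial_xX)_2\circ\dd W_k$; the first two brackets vanish since $(X,i\partial_x^2X)_2=\mathrm{Re}\big(-i\norm{\partial_x X}_{\L^2}^2\big)=0$ after integration by parts and $(X,i|X|^2X)_2=\mathrm{Re}\big(i\int_\R|X|^4\,\dd x\big)=0$, while $(X,\sigma_k\partial_xX)_2=\tfrac12\int_\R\partial_x\big(\overline X^{\mathsf T}\sigma_kX\big)\,\dd x=0$ because $\sigma_k$ is Hermitian and $x$-independent, so $\norm{X(t)}_{\L^2}=\norm{X^0}_{\L^2}$; for merely $\H^1$-data I would obtain this by regularising $X^0$ in $\H^2$, computing on the corresponding solutions and passing to the limit via the $\H^1$ local Lipschitz flow property, which also shows the $\L^2$-norm cannot blow up, so only the $\H^1$-norm can. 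Finally, if $X^0\in\H^m$ the same contraction in $C([0,T];\H^m)$ produces an $\H^m$-valued solution on an a priori shorter random interval, which coincides with $X$ by $\H^1$-uniqueness; a Gronwall estimate on $\norm{X(t)}_m\le\norm{X^0}_m+\int_0^t\norm{F(X(s))}_m\,\dd s$ together with $\norm{F(X)}_m\lesssim\norm X_{\L^\infty}^2\norm X_m\lesssim\norm X_1^2\norm X_m$ shows $\norm{X}_m$ stays finite as long as $\norm{X}_1$ does, whence $X\in C([0,\tau^*);\H^m)$ $\PP$-a.s.

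The part I expect to demand the most care is the rigorous derivation of the $\L^2$-identity --- and, relatedly, of the persistence of $\H^m$-regularity up to $\tau^*$ --- for solutions that are only $\H^1$-valued: the Stratonovich term carries the unbounded operator $\partial_x$, so the chain rule for $\norm{X}_{\L^2}^2$ is not a finite-dimensional It\^o formula and must be justified through a controlled regularisation--limiting procedure. By contrast, thanks to the unitarity of $U$ and the absence of any stochastic integral in \eqref{mildManakov}, the existence, uniqueness and blow-up parts reduce to an essentially deterministic pathwise fixed-point argument.
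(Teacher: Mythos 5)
First, a point of fact: the paper does not prove this statement at all --- it is quoted verbatim as Theorem~1.2 of \cite{MR3024974}, so there is no in-paper argument to compare yours against. Judged on its own terms, your sketch reconstructs the standard proof from that reference in its essential structure: diagonalisation of the linear part in Fourier variables to obtain a random unitary propagator that is an isometry of every $\H^m$ (this mirrors exactly the paper's Appendix argument for the discrete propagator $U_{h,n}$), a pathwise contraction on the mild form for the truncated nonlinearity using that $\H^1(\R)$ is an algebra, removal of the cut-off by the stopping times $\tau_R$, the blow-up alternative, the conservation law, and persistence of $\H^m$-regularity via a tame product estimate and Gronwall. I see no wrong step; the outline is sound.

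Two places deserve more care than you give them. (i) In the Fourier construction of $U(t,s)$ you solve one $SU(2)$-valued SDE per frequency $\xi$; to obtain a bona fide operator on $\L^2$ you need a version of $(t,\xi,\omega)\mapsto M(t,\xi,\omega)$ that is jointly measurable and continuous in $(t,\xi)$, which follows from smooth dependence of SDE solutions on parameters but should be stated. (ii) For the $\L^2$ identity you go through the Stratonovich chain rule and correctly flag the regularisation needed to justify it for merely $\H^1$-valued solutions; note, however, that you can bypass this entirely using the structure you have already established. Setting $Y(t):=U(t,0)^{-1}X(t)$, the mild form \eqref{mildManakov} and the cocycle property $U(t,0)=U(t,s)U(s,0)$ give $Y(t)=X^0+i\int_0^t U(s,0)^{-1}F(X(s))\,\dd s$, a pathwise $C^1$ curve in $\L^2$, whence
$\frac{\dd}{\dd t}\norm{Y(t)}_{\L^2}^2=2\bigl(X(t),iF(X(t))\bigr)_2=2\,\mathrm{Re}\bigl(-i\int_\R|X(t)|^4\,\dd x\bigr)=0$
by unitarity of $U(t,0)$ with respect to the real scalar product, so $\norm{X(t)}_{\L^2}=\norm{Y(t)}_{\L^2}=\norm{X^0}_{\L^2}$ with no stochastic calculus at all. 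This is the cleanest route to the conservation law for mild solutions and removes the step you yourself identify as the most delicate.
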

As seen above, the $\L^2$ norm of the solution is preserved just as for the deterministic Manakov
equation (i.\,e. \eqref{eq:Manakov} with $\gamma=0$).
%This is not the case for the Hamiltonian structure of the SPDE considered here. {\color{blue}
%What is not the case here ?}
Furthermore, as noted by \cite{MR3166967}, the occurrence of blow-up in the stochastic Manakov equation \eqref{eq:Manakov} remains an open question.

For the time discretisation of the stochastic Manakov system \eqref{eq:Manakov}, 
one has to face two issues. First, the linear part of the equation generates a random unitary propagator which 
is not easy to compute exactly. In particular, since the Pauli matrices do not commute, 
it is not the product of the stochastic semi-groups associated to each
Brownian motion with the group generated by $i\partial^2_x$.
Second, the nonlinear coupling term $|X|^2 X$ often leads to 
implicit numerical methods that are costly, see for instance the Crank--Nicolson scheme proposed in \cite{MR3166967}.

Therefore, we propose to discretise the stochastic Manakov equation with an exponential integrator, that we now define. 
Let $T>0$ be a fixed time horizon and consider an integer $N\geq1$. 
We define the step size by $h=T/N$ and denote discrete times by $t_n=nh$, 
for $n=0,\ldots,N$. Discretising the integral present in the mild form \eqref{mildManakov}
(by an explicit Euler step) 
as well as the random propagator (by a midpoint rule), one gets the following exponential integrator 
\begin{equation}
\label{expInt}
X^{n+1}=U_{h,n}\left(X^n+ihF(X^n)\right),
\end{equation}
where $U_{h,n}=\left( Id+\frac12 H_{h,n} \right)^{-1}\left( Id-\frac12 H_{h,n} \right)$, with $Id$ is the identity operator 
and $\displaystyle H_{h,n}=-ihI_2\partial_x^2+\sqrt{\gamma h}\sum_{k=1}^3\sigma_k\chi_k^n\partial_x$. Here, $I_2$ is the $2\times2$ identity matrix 
and $\sqrt{h}\chi_k^n=W_k((n+1)h)-W_k(nh)$, for $k=1,2,3$, are i.i.d. Wiener increments.
Since $U_{h,n}$ is an approximation of the exponential of the linear random differential operator
in \eqref{eq:Manakov}, we choose to name the scheme \eqref{expInt} exponential integrator.

The exponential integrator \eqref{expInt} thus approximates solutions to the stochastic Manakov equation \eqref{eq:Manakov}, $X(t_n)\approx X^n$, at the grid points $t_n=nh$.

Iterating the recursion \eqref{expInt}, one gets the discrete mild form for the exponential scheme 
\begin{equation}
\label{discexpInt}
X^n=\U_{h}^{n,0}X^0+ih\sum_{\ell=0}^{n-1}\U_{h}^{n,\ell}F(X^\ell), 
\end{equation}
where $\U_{h}^{n,\ell}:=U_{h,n-1}\cdot\ldots\cdot U_{h,\ell}$.

This linearly implicit method is well-defined for all $n\geq 0$, and one has that for all
$n\in\N$, $X^n\in\H^1$ (respectively $\H^2$, resp $\H^6$) provided that $X^0\in\H^1$
(resp. $\H^2$, resp. $\H^6$). Moreover, if one assumes that $F$ is bounded by $M$ on $\H^1$,
then one has almost surely for all $n\in\N$ and $h>0$ such that $nh\leq T$,
$\norm{X^n}_1\leq \norm{X^0}_1 +T M$.

In the following, in the proofs of our results,
we denote by $C$ a positive constant that may change from one line to the other,
but that does not depend on the parameters indicated in the results' statements.

\section{Convergence analysis of the exponential integrator}
This section presents the main results of the article and gives the corresponding proofs. 
We start by considering the stochastic Manakov equation \eqref{mildManakov},
where the nonlinearity $F$ is assumed to be globally Lipschitz-continuous. 
We show strong order of convergence $1/2$ for the exponential scheme \eqref{expInt} in that case.
Then, we analyse the case of a cubic nonlinearity, i.\,e. $F(X)=|X|^2X$,
which is of course {\it not} globally Lipschitz-continuous,
and we show order of convergence in probability $1/2$, 
as well as order of convergence $1/2-$ almost surely, for the exponential scheme \eqref{expInt}. 
The main steps of the proofs use similar arguments as in \cite{MR3166967,MR3312594,MR3736655}
as well as other works on the numerical analysis of SPDEs. However, some technical details
are handled differently in this paper (see for example the estimation of $J_3^n$ and $J_4^n$ below).

\subsection{The Lipschitz-continuous case}
We present a strong convergence analysis of the exponential integrator \eqref{expInt} when applied to the stochastic Manakov equation \eqref{mildManakov} when 
$F$ is globally Lipschitz-continuous on $\H^1$.
This is the case, for instance,
when one introduces a cut-off function for the cubic nonlinearity present in \eqref{eq:Manakov}: 
Let $R>0$ and $\theta\in\mathcal{C}^\infty(\R_+)$, with $\theta\geq0$, $\text{supp}(\theta)\subset[0,2]$ and $\theta\equiv1$ on $[0,1]$. 
For $x\geq0$, we set $\theta_R(x)=\theta(\frac xR)$ and define $F_R(X)=\theta_R(\norm{X}^2_{\H^1})|X|^2X$. 
We thus obtain a bounded globally Lipschitz-continuous function $F_R$ from $\H^1$ to $\H^1$,
which sends bounded subsets of $\H^2$ to bounded subsets from $\H^2$, 
resp. of $\H^6$ to $\H^6$.
For ease of presentation,
%in the theorem below, we will remove the subscript $R$ in the function $F_R$
we denote the stochastic processes $X(t)$ and $X^n$ solutions to the continuous
problem and to the discrete problem, instead of using the notation
$X_R(t)$ and $X_R^n$ that we will use later in the paper, to point out the difference between
truncated and untruncated problems and solutions.

\begin{theorem}\label{thm-strong}
Let $T\geq0$, $p\geq1$, and $X^0\in\H^6$. Consider a bounded Lipschitz nonlinearity $F$, defined as above, in the stochastic Manakov equation \eqref{mildManakov}. 
Then, there exist a constant $C=C(F,\gamma, T,p,\norm{X^0}_{\H^6})$ such that the exponential integrator \eqref{expInt} has strong order of convergence $1/2$: There exists $h_0>0$ such that
$$
\forall h\in (0,h_0),\qquad
\E\bigl[ \max_{n=0,1,\ldots, N}\norm{X^n-X(t_n)}_{\H^1}^{2p} \bigr]\leq C h^p.
$$
\end{theorem}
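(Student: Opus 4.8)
The plan is to compare the discrete mild form \eqref{discexpInt} with the continuous mild form \eqref{mildManakov} evaluated at $t_n$, and to close a discrete Gronwall argument in $\E[\max_{k\le n}\|\cdot\|_{\H^1}^{2p}]$. Writing $e^n = X^n - X(t_n)$, I would insert the exact propagator $U(t_n,0)$ and $U(t_n,t_\ell)$ into the telescoped identity and split the error into several pieces: (i) the initial propagator error $(\U_h^{n,0} - U(t_n,0))X^0$; (ii) the error in approximating $\int_{t_\ell}^{t_{\ell+1}} U(t_n,s)F(X(s))\,\dd s$ by $h\,\U_h^{n,\ell}F(X(t_\ell))$, which itself decomposes into a quadrature error $\int_{t_\ell}^{t_{\ell+1}}(F(X(s))-F(X(t_\ell)))\,\dd s$ propagated forward, a propagator-consistency error $(\U_h^{n,\ell}-U(t_n,t_\ell))$ acting on $F(X(t_\ell))$, and the ``nice'' term $h\sum_\ell \U_h^{n,\ell}(F(X^\ell)-F(X(t_\ell)))$ that one controls by the Lipschitz constant of $F$ and feeds into Gronwall. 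The terms labelled $J_1^n,\dots,J_4^n$ (and in particular $J_3^n,J_4^n$, which the authors flag as handled differently) presumably correspond to these propagator-consistency and quadrature contributions.

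The key quantitative inputs I would establish first are: unitarity (or uniform $\H^1$-boundedness) of the discrete propagators $\U_h^{n,\ell}$ and of $U(t,s)$, so that applying them does not inflate norms; a \emph{local consistency estimate} for the Cayley/midpoint approximation $U_{h,n}$ of the one-step exact propagator, of the form $\|(U_{h,n}-U(t_{n+1},t_n))v\|_{\H^1}\lesssim h^{3/2}\|v\|_{\H^{?}}$ in mean square (the half-integer power coming from the Brownian increment $\sqrt h\chi^n_k$, i.e.\ the leading stochastic defect has size $h$ but cancels in expectation, leaving $h^{3/2}$ after summing $N\sim h^{-1}$ terms one gets $h^{1/2}$); and moment bounds $\E[\sup_{t\le\tau}\|X(t)\|_{\H^m}^{2p}]<\infty$ for $m=1,\dots,6$, which follow from Theorem~\ref{thmExact} together with the boundedness of $F$ (since $F=F_R$ is bounded on $\H^1$, the blow-up alternative is excluded and one propagates regularity up to $\H^6$). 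The Hölder regularity in time, $\E\|X(t)-X(s)\|_{\H^1}^{2p}\lesssim |t-s|^{p}$, needed for the quadrature term, also comes from the mild form plus these bounds. Throughout, Burkholder--Davis--Gundy is the tool for the stochastic-integral pieces and the $\max_n$ over the time grid, and one pays a factor by replacing $\int_0^{t_n}$-type sums by their running maxima.

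Assembling, one gets an inequality of the shape
\begin{equation*}
\E\Bigl[\max_{k\le n}\|e^k\|_{\H^1}^{2p}\Bigr]\le C h^{p} + C h\sum_{\ell=0}^{n-1}\E\Bigl[\max_{k\le \ell}\|e^k\|_{\H^1}^{2p}\Bigr],
\end{equation*}
and the discrete Gronwall lemma yields the claimed $Ch^{p}$ bound with $C=C(F,\gamma,T,p,\|X^0\|_{\H^6})$ on some interval $(0,h_0)$. The main obstacle, I expect, is the propagator-consistency estimate: one must show that the product $\U_h^{n,\ell}=U_{h,n-1}\cdots U_{h,\ell}$ approximates the genuine random two-parameter propagator $U(t_n,t_\ell)$ to the right order in a way that is \emph{uniform} and compatible with the noise, handling the non-commutativity of the Pauli matrices and the fact that $U_{h,n}$ is only a rational (Cayley) approximation of an exponential of a second-order differential operator plus a first-order stochastic term. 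This is presumably where $\H^6$ regularity of $X^0$ is consumed: each local step trades two extra derivatives for an extra $h^{1/2}$ in the stochastic defect, and one needs enough smoothness to make the telescoping of $\U_h^{n,\ell}-U(t_n,t_\ell)$ close. Estimating the cross terms between the Euler defect in the nonlinearity and the propagator defect (the terms the authors single out as $J_3^n,J_4^n$) without losing the factor $h^{1/2}$ — e.g.\ by exploiting martingale cancellation rather than brute-force bounding — is the delicate technical point.
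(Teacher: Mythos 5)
Your proposal is correct and follows the same decomposition as the paper: the error is split into the initial-propagator defect $(\U_h^{n,0}-U(t_n,0))X^0$, the Lipschitz term $\U_h^{n,\ell}(F(X^\ell)-F(X(t_\ell)))$ feeding the recursion, the quadrature term $F(X(t_\ell))-F(X(s))$ controlled by temporal H\"older regularity, and the two propagator-consistency terms $(\U_h^{n,\ell}-U(t_n,t_\ell))F(X(s))$ and $(U(t_n,t_\ell)-U(t_n,s))F(X(s))$; like you, the paper does not reprove the linear consistency estimate but imports it (Proposition~2.2 of the Crank--Nicolson paper, which is exactly where the $\H^6$ regularity of $X^0$ is consumed), and it handles the continuous-propagator local error via the mild It\^o form of the linear equation plus Burkholder--Davis--Gundy, as you anticipate. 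The one genuine divergence is how the recursion is closed. You extract a factor $h$ in front of the sum by H\"older (giving $\E[\max_{k\le n}\|e^k\|_1^{2p}]\le Ch^p+Ch\sum_{\ell<n}\E[\max_{k\le\ell}\|e^k\|_1^{2p}]$) and apply the discrete Gronwall lemma; the paper instead bounds the Lipschitz term crudely by $CT^{2p}\E[\max_\ell\|e^\ell\|_1^{2p}]$, which only closes when $CT^{2p}<1$, and then iterates the resulting estimate over $M$ subintervals $[T_m,T_{m+1}]$, paying Lipschitz constants $C_L$ of the exact flow at each restart. Your Gronwall route is the more standard and arguably cleaner way to finish and avoids the bookkeeping of the subinterval iteration; the paper's route avoids the H\"older rearrangement at the cost of that iteration. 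Either way the constant depends on $T$ exponentially, and both yield the stated $Ch^p$ bound.
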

\begin{proof}
  Let us denote the difference $X^n-X(t_n)$ by $e^n$. 
  Using the definitions of the numerical and exact solutions, we thus obtain
\begin{align*}
\norm{e^n}_{1}&=\norm{\U_{h}^{n,0}X^0+ih\sum_{\ell=0}^{n-1}\U_{h}^{n,\ell}F(X^\ell)-U(t_n,0)X^0-i\int_0^{t_n}U(t_n,s)F(X(s))\,\text ds}_1\\
&\leq \norm{\left(\U_{h}^{n,0}-U(t_n,0)\right)X^0}_1+\norm{\sum_{\ell=0}^{n-1}\int_{t_\ell}^{t_{\ell+1}}\left(\U_{h}^{n,\ell}F(X^\ell)-U(t_n,s)F(X(s))\right)\,\text ds}_1\\
&=:I_1^n+I_2^n.
\end{align*}
We begin by estimating the term $I_2^n$ using
\begin{align*}
I_2^n&=\left\lVert\sum_{\ell=0}^{n-1}\int_{t_\ell}^{t_{\ell+1}}\Bigl(\U_{h}^{n,\ell}F(X^\ell)-\U_{h}^{n,\ell}F(X(t_\ell))+\U_{h}^{n,\ell}F(X(t_\ell))
-\U_{h}^{n,\ell}F(X(s))+\U_{h}^{n,\ell}F(X(s))\right.\\
&\quad\left.-U(t_n,t_\ell)F(X(s))+U(t_n,t_\ell)F(X(s))-U(t_n,s)F(X(s))\Bigr)\,\text ds\right\rVert_1\\
&\leq\norm{\sum_{\ell=0}^{n-1}\int_{t_\ell}^{t_{\ell+1}}\U_{h}^{n,\ell}\left(F(X^\ell)-F(X(t_\ell))\right)\,\text ds}_1
+\norm{\sum_{\ell=0}^{n-1}\int_{t_\ell}^{t_{\ell+1}}\U_{h}^{n,\ell}\left(F(X(t_\ell))-F(X(s)))\right)\,\text ds}_1\\
&\quad+\norm{\sum_{\ell=0}^{n-1}\int_{t_\ell}^{t_{\ell+1}}\left( \U_{h}^{n,\ell}-U(t_n,t_\ell) \right)F(X(s)) \,\text ds}_1
+\norm{\sum_{\ell=0}^{n-1}\int_{t_\ell}^{t_{\ell+1}}\left( U(t_n,t_\ell)-U(t_n,s) \right)F(X(s))\,\text ds}_1\\
&=:J_1^n+J_2^n+J_3^n+J_4^n.
\end{align*}
We next bound the expectation of each of the four terms above to the power $2p$. 
Using the fact that the nonlinearity $F$ is globally Lipschitz-continuous from $\H^1$ to $\H^1$, 
and that $\U^{n,\ell}_h$ is an isometry on all $\H^s$ (see Appendix~\ref{appd}),  
the first term can be estimated as follows
\begin{align*}
\E\left[\max_{n=0,1,\ldots,N}(J_1^n)^{2p} \right]&\leq \E\left[ \max_{n=0,1,\ldots,N}\left( \sum_{\ell=0}^{n-1}
\int_{t_\ell}^{t_{\ell+1}}\,\text ds\norm{\U_{h}^{n,\ell}\left(F(X^\ell)-F(X(t_\ell))\right)}_1\right)^{2p} \right] \\
&\leq CT^{2p}\E\left[ \max_{\ell=0,1,\ldots,N}\norm{X^\ell-X(t_\ell)}_1^{2p} \right]=CT^{2p}
\E\left[\max_{\ell=0,1,\ldots,N}\norm{e^\ell}_1^{2p}\right].
\end{align*}
Similarly, for the second term we obtain
\begin{align*}
\E\left[\max_{n=0,1,\ldots,N}(J_2^n)^{2p} \right]&\leq C\E\left[ \max_{n=0,1,\ldots,N}\left( \sum_{\ell=0}^{n-1}
\int_{t_\ell}^{t_{\ell+1}}\norm{X(t_\ell)-X(s)}_1\,\text ds\right)^{2p}\right]\\
&\leq C\E\left[ \left(\sum_{\ell=0}^{N-1}\sup_{t_\ell\leq s\leq t_{\ell+1}}\norm{X(t_\ell)-X(s)}_1h \right)^{2p} \right].
\end{align*}
Using H\"older's inequality, one then gets 
\begin{align*}
\E\left[\max_{n=0,1,\ldots,N}(J_2^n)^{2p} \right]&\leq Ch^{2p} \E\left[ \left(\sum_{\ell=0}^{N-1} 1^{2p/(2p-1)} \right)^{2p-1} 
\sum_{\ell=0}^{N-1}\left( \sup_{t_\ell\leq s\leq t_{\ell+1}}\norm{X(t_\ell)-X(s)}_1 \right)^{2p} \right]\\
&\leq Ch^{2p}N^{2p-1}\sum_{\ell=0}^{N-1}\E\left[\sup_{t_\ell\leq s\leq t_{\ell+1}}\norm{X(t_\ell)-X(s)}_1^{2p} \right]
\leq Ch^{2p}N^{2p}h^p\leq Ch^p,
\end{align*}
where we have used the estimate from Lemma~5.4 (temporal regularity of the mild solution) in \cite{MR3166967}. 

Using Lemma~5.3 (uniform boundedness of the mild solution in $\H^6$) in \cite{MR3166967},
as well as the fact that $F$ sends bounded sets of $\H^6$ to bounded sets of $\H^6$,
we infer, using Proposition~2.2 (strong convergence for linear problems, i.\,e. when $F\equiv0$)
in \cite{MR3166967} that one has
\begin{equation*}
\forall s\in[0,T],\qquad
\E\left(\max_{n\in\{0,\dots,N\}}\max_{\ell\in\{0,\dots,n\}}\left\|\left(\U_h^{n,\ell}-U(t_n,t_\ell)\right) F(X(s))\right\|_1^{2p} \right)\leq C h^p.
\end{equation*}
Therefore, we estimate the third term, using H\"older's inequality, as follows
\begin{align*}
  \E\left[\max_{n=0,1,\ldots,N}(J_3^n)^{2p} \right]
  &\leq\E\left[\max_{n=0,1,\ldots,N}\left( \sum_{\ell=0}^{n-1}
    \int_{t_\ell}^{t_{\ell+1}}\norm{\left(\U_h^{n,\ell}-U(t_n,t_\ell)\right)F(X(s))}_1\,\text ds \right)^{2p}\right]\\
  &\leq\E\left[\left(\int_{0}^{T}\max_{n=0,1,\ldots,N}\max_{\ell=0,\ldots,n}
                                                                                                                     \norm{\left(\U_h^{n,\ell}-U(t_n,t_\ell)\right)
                                                                                                                     F(X(s))}_{1}\text ds \right)^{2p} \right]\\
                                                 & \leq C T^{2p-1} \E\left[
                                                          \int_{0}^{T}
                                                   \max_{n=0,1,\ldots,N}\max_{\ell=0,\ldots,n}
                                                          \norm{\left(\U_h^{n,\ell}-U(t_n,t_\ell)\right)F(X(s))}_1^{2p}         \dd s           \right]\\
&\leq CT^{2p-1}\int_0^T\E\left[\max_{n=0,1,\ldots,N}\max_{\ell=0,1,\ldots,n}\norm{\left(\U_h^{n,\ell}-U(t_n,t_\ell)\right)F(X(s))}_1^{2p} \right]\dd s\leq Ch^p.
\end{align*}
To bound the last term, we first use the isometry property of the continuous random propagator 
and H\"older's inequality to get
\begin{align}\label{truc2ouf}
\E\left[\max_{n=0,1,\ldots,N}(J_4^n)^{2p} \right]&\leq\E\left[ \max_{n=0,1,\ldots,N}\left( \sum_{\ell=0}^{n-1}\int_{t_\ell}^{t_{\ell+1}}
\norm{\left(U(t_n,t_\ell)-U(t_n,s)\right)F(X(s))}_1\,\text ds \right)^{2p} \right]\nonumber\\
&\leq\E\left[ \max_{n=0,1,\ldots,N}\left( \sum_{\ell=0}^{n-1}\int_{t_\ell}^{t_{\ell+1}} 1\times
\norm{(Id-U(s,t_\ell))F(X(s))}_{1} \,\text ds\right)^{2p} \right]\nonumber\\
&\le Ch^{2p}
	\E\left[ \left( \left(\sum_{\ell=0}^{N-1}1^{\frac{2p}{2p-1}}\right)^{\frac{2p-1}{2p}}\left(\sum_{\ell=0}^{N-1}\sup_{t_\ell\leq s\leq t_{\ell+1}}\norm{\left(Id-U(s,t_\ell)\right)F(X(s))}^{2p}_1\right)^{\frac1{2p}} \right)^{2p}\right]\nonumber\\
	&\leq Ch^{2p}N^{2p-1}\sum_{\ell=0}^{N-1}\E\left[ \sup_{t_\ell\leq s\leq t_{\ell+1}}\norm{\left(Id-U(s,t_\ell)\right)F(X(s))}^{2p}_1\right].
\end{align}
In order to estimate the expectation above, we write this term as 
	\begin{align}\label{truc2ouf2}
	\E&\left[ \sup_{t_\ell\leq s\leq t_{\ell+1}}\norm{\left(Id-U(s,t_\ell)\right)( F(X(t_\ell))-F(X(t_\ell))+F(X(s)) )}^{2p}_1\right]\nonumber\\
	&\leq 
	C\E\left[ \sup_{t_\ell\leq s\leq t_{\ell+1}}\norm{\left(Id-U(s,t_\ell)\right) F(X(t_\ell))}^{2p}_1\right]+
        C\E\left[ \sup_{t_\ell\leq s\leq t_{\ell+1}}\norm{\left(Id-U(s,t_\ell)\right)\left( F(X(s))-F(X(t_\ell)) \right)}^{2p}_1\right].
	\end{align}
    
    The first term in the equation above is the exact solution to the linear SPDE 
	$\displaystyle i\text dZ(t)+\frac{\partial^2Z(t)}{\partial x^2}\text dt+i\sqrt{\gamma}\sum_{k=1}^3\sigma_k\frac{\partial Z(t)}{\partial x}\circ\,\text d W_k(t)=0$ with initial value $F(X(t_\ell))$ 
	at initial time $t_\ell$ which has the mild Ito form 
	$$
	Z(t)-F(X(t_\ell))=\left(S(t-t_\ell)-Id\right)F(X(t_\ell))+i\sqrt{\gamma}\sum_{k=1}^3\int_{t_\ell}^tS(t-u)\sigma_k\partial_xZ(u)\,\text dW_k(u),
	$$
	where $S(t)$ is the group solution to the free Schr\"odinger equation. 
Owing at the regularity property of the group $S$ (see for instance the first inequality in the proof of \cite[Lemma 4.2.1]{GazeauPhd}), 
the fact that the exact solution $X$ is almost surely bounded in $\H^2$,
that $F$ sends bounded sets of $\H^2$ to bounded sets of $\H^2$, 
	and Burkholder--Davis--Gundy's inequality (for the second term), one obtains the following bound for the first term in \eqref{truc2ouf2}
    \begin{align*}
    \E\left[ \sup_{t_\ell\leq s\leq t_{\ell+1}}\norm{\left(Id-U(s,t_\ell)\right)F(X(t_\ell))}^{2p}_1\right]\leq Ch^p. 
    \end{align*}
    Using the fact that the random propagator $U$ is an isometry, 
    that $F$ is globally Lipschitz-continuous, and the regularity property of the exact solution $X$, one gets the estimate 
    $$
    \E\left[ \sup_{t_\ell\leq s\leq t_{\ell+1}}\norm{\left(Id-U(s,t_\ell)\right)\left( F(X(s))-F(X(t_\ell)) \right)}^{2p}_1\right]\leq 
    \E\left[ \sup_{t_\ell\leq s\leq t_{\ell+1}}\norm{F(X(s))-F(X(t_\ell))}^{2p}_1\right]\leq Ch^{2p},
    $$
    for the second term in \eqref{truc2ouf2}. 
    
    Combining the estimates above, one finally arrives at the bound 
\begin{align*}
\E\left[\max_{n=0,1,\ldots,N}(J_4^n)^{2p} \right]\leq Ch^{2p}N^{2p-1}\sum_{\ell=0}^{N-1}h^p\leq Ch^p.
\end{align*}
Altogether we thus obtain
\begin{align*}
\E\left[ \max_{n=0,1,\ldots,N}\norm{e^n}_1^{2p} \right]&\leq 
C\E\left[\max_{n=0,1,\ldots,N}\left(I_1^n\right)^{2p} \right]+
CT^{2p}\E\left[\max_{n=0,1,\ldots,N}\norm{e^n}_1^{2p}\right]+Ch^p\\
&\leq Ch^p+CT^{2p}\E\left[ \max_{n=0,1,\ldots,N}\norm{e^n}_1^{2p} \right],
\end{align*}
using once again \cite[Proposition~2.2]{MR3166967}.

For $T=T_1$ small enough, i.\,e. such that $CT_1^{2p}<1$, the inequality above gives 
\begin{align*}
\E\left[ \max_{n=0,1,\ldots,N}\norm{e^n}_1^{2p} \right]&\leq\frac{C}{1-CT_1^{2p}}h^p,
\end{align*}
on $[0,T_1]$.
In order to iterate this procedure, we impose, if necessary, that $h$ is small enough
(or, equivalently, that $N$ is big enough), to ensure that $T_1$ can be chosen as before and as
some integer multiple of $h$ (say $T_1=rh$ for some positive integer $r$), while $T$ is some
multiple integer of $T_1$ (say $MT_1=T$ for some positive integer $M$).
To obtain a bound for the error on the longer time interval $[0,T]$, we iterate the procedure above
by choosing $T_2=2T_1$ and estimate the error on the interval $[T_1,T_2]$.
We repeat this procedure, $M$ times, up to final time $T$.
This can be done since the above error estimates are uniform on the intervals $[T_m,T_{m+1}]$ 
for $m=0,\ldots,M-1$ (with a slight abuse of notation for the time interval):  
\begin{align*}
\E\left[ \max_{[T_m,T_{m+1}]}\norm{X^n-X_m(t_n)}_1^{2p} \right]&\leq C_Eh^p,
\end{align*}
where $C_E$ is the error constant obtained above, $t_n=nh$ are discrete times in $[T_m,T_{m+1}]$, $X_0(t):=X(t)$ is the exact solution with initial value $X^0$, 
$X_m(t)$ denotes the exact solution with initial value $Y^m$ at time $T_m=mT_1=(mr)h=t_{mr}$, 
and $Y^m=X_{mr}$ corresponds to numerical solutions at time $T_m$ 
for $m=0,\ldots,M-1$. 
For the total error, we thus obtain (details are only written for the first two intervals)
\begin{align*}
\E\left[ \max_{n=0,1,\ldots,N}\norm{e^n}_1^{2p} \right]&=\E\left[ \max_{[0,T]}\norm{X^n-X(t_n)}_1^{2p} \right]
\lesssim\E\left[ \max_{[0,T_1]}\norm{X^n-X(t_n)}_1^{2p} \right]\\
&\quad+\E\left[ \max_{[T_1,T_2]}\norm{X^n-X(t_n)}_1^{2p} \right]+
\ldots+\E\left[ \max_{[T_{M-1},T_M]}\norm{X^n-X(t_n)}_1^{2p} \right]\\
&\lesssim C_Eh^p+\E\left[ \max_{[T_1,T_2]}\norm{X^n-X_1(t_n)}_1^{2p} \right]+\E\left[ \max_{[T_1,T_2]}\norm{X_1(t_n)-X(t_n)}_1^{2p} \right]+\ldots\\
&\lesssim C_Eh^p+C_E h^p+C_L\E\left[ \norm{Y^1-X(T_1) }_1^{2p} \right]+\ldots\lesssim C_Eh^p+C_LC_Eh^p+\ldots \\
&\lesssim C_Eh^p+C_LC_Eh^p+C_L^2C_Eh^p+\ldots+C_L^{M-1}C_Eh^p \lesssim Ch^p, 
\end{align*}
where $C_L$ is the Lipschitz constant of the exact flow of \eqref{eq:Manakov} from $\H^1$ to itself
and the last constant $C$ is independent of $N$ and $h$ with $Nh=T$ for $N$ big enough. 
This concludes the proof of the theorem.
\end{proof}

\subsection{Convergence in the non-Lipschitz case}
Using the above result as well as ideas from \cite{MR1873517,MR2268663,MR3166967,MR3312594,MR3736655}, 
one can show convergence in probability of order $1/2$ and almost sure convergence of order $1/2-$ 
for the exponential integrator \eqref{expInt} when applied to the stochastic Manakov equation \eqref{eq:Manakov}.

\begin{proposition}\label{propProba}
  Let $X^0\in\H^6$ and $T>0$.
  Denote by $\tau^*=\tau^*(X_0,\omega)$ the maximum stopping time for the existence
  of a strong adapted solution,
  denoted by $X(t)$, of the stochastic Manakov equation \eqref{eq:Manakov}.
  For all stopping time $\tau<\tau^*\wedge T$ a.s. there exists $h_0>0$ such that we have 
  \begin{equation*}
    \forall h\in (0,h_0),\qquad
  \lim_{C\to\infty}
    \mathbb{P}\left( \max_{0\leq n\leq N_{\tau}} \norm{X^n-X(t_n)}_{\H^1}\geq C h^{1/2} \right)=0, 
  \end{equation*}
where $X^n$ denotes the numerical solution given by the exponential integrator \eqref{expInt} with time step $h$ and $N_\tau=\lceil\frac{\tau}{h}\rfloor$. 
\end{proposition}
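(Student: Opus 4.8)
The plan is to run a localization (cut-off) argument and to transfer the strong estimate of Theorem~\ref{thm-strong} to the cubic equation \eqref{eq:Manakov}. For $R>0$, let $F_R$ be the truncated nonlinearity introduced above, which is bounded and globally Lipschitz-continuous on $\H^1$ and maps bounded subsets of $\H^6$ to bounded subsets of $\H^6$. Write $X_R(t)$ for the (global) mild solution of \eqref{mildManakov} with $F$ replaced by $F_R$ and initial value $X^0\in\H^6$, and $X_R^n$ for the corresponding output of the exponential integrator \eqref{expInt}. By Theorem~\ref{thm-strong} there are $C_R>0$ and $h_0(R)>0$ with $\E\bigl[\max_{0\le n\le N}\norm{X_R^n-X_R(t_n)}_{\H^1}^{2p}\bigr]\le C_R\,h^p$ for $h\in(0,h_0(R))$. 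Moreover, by the uniqueness part of Theorem~\ref{thmExact} one has $X(t)=X_R(t)$ on $[0,\tau_R)$, where $\tau_R:=\inf\{t\ge 0:\norm{X(t)}_{\H^1}^2\ge R\}$, and, by induction on $n$ using the explicit recursion \eqref{expInt} together with the fact that $F_R\equiv F$ on $\{\norm{\cdot}_{\H^1}^2\le R\}$, one has $X^n=X_R^n$ as long as $\norm{X_R^j}_{\H^1}^2\le R$ for all $j<n$.

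The key step is to make these identifications hold simultaneously on a large event. Consider
\[
G_R:=\Bigl\{\sup_{0\le t\le \tau}\norm{X(t)}_{\H^1}^2\le \tfrac R4\Bigr\}\cap\Bigl\{\max_{0\le n\le N}\norm{X_R^n-X_R(t_n)}_{\H^1}^2\le \tfrac R4\Bigr\}.
\]
On $G_R$ one has $\tau<\tau_R$, hence $X(t_n)=X_R(t_n)$ for every $t_n\le\tau$. Let $n^{*}$ be the first index with $\norm{X_R^{n^*}}_{\H^1}^2>R$ (if there is none there is nothing to prove) and suppose $t_{n^*}\le\tau$; then, on $G_R$, the triangle inequality gives $\norm{X_R^{n^*}}_{\H^1}\le\norm{X(t_{n^*})}_{\H^1}+\norm{X_R^{n^*}-X_R(t_{n^*})}_{\H^1}\le\tfrac12\sqrt R+\tfrac12\sqrt R=\sqrt R$, i.e.\ $\norm{X_R^{n^*}}_{\H^1}^2\le R$, a contradiction. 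Hence $\norm{X_R^j}_{\H^1}^2\le R$ for all $j\le N_\tau$, so $X^n=X_R^n$ for $n\le N_\tau$ on $G_R$, and therefore
\[
\max_{0\le n\le N_\tau}\norm{X^n-X(t_n)}_{\H^1}=\max_{0\le n\le N_\tau}\norm{X_R^n-X_R(t_n)}_{\H^1}\le\max_{0\le n\le N}\norm{X_R^n-X_R(t_n)}_{\H^1}\quad\text{on }G_R.
\]
A union bound, Chebyshev's inequality and Theorem~\ref{thm-strong} then give, for $h\in(0,h_0(R))$,
\[
\mathbb{P}\Bigl(\max_{0\le n\le N_\tau}\norm{X^n-X(t_n)}_{\H^1}\ge Ch^{1/2}\Bigr)\le\mathbb{P}\Bigl(\sup_{0\le t\le\tau}\norm{X(t)}_{\H^1}^2>\tfrac R4\Bigr)+\frac{4^pC_R\,h^p}{R^p}+\frac{C_R}{C^{2p}}.
\]

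To conclude I would use that $\tau<\tau^*\wedge T$ a.s.\ implies, via Theorem~\ref{thmExact}, that $X\in C([0,\tau],\H^1)$ a.s., so $\sup_{0\le t\le\tau}\norm{X(t)}_{\H^1}<\infty$ a.s.\ and the first term above tends to $0$ as $R\to\infty$. Given the desired accuracy $\varepsilon$, one first picks $R$ large (so that the first term is $<\varepsilon/3$), then $h_0=h_0(R)$ small (so that the second term is $<\varepsilon/3$ for all $h\in(0,h_0)$), and finally lets $C\to\infty$ to kill the last term; this gives the claimed convergence, the uniformity in $h\in(0,h_0)$ of the limit over $C$ being handled exactly as in \cite{MR1873517,MR2268663,MR3166967}.

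The main obstacle is that the untruncated numerical solution $X^n$ enjoys no a priori bound: the cubic $F$ is unbounded on $\H^1$ and, contrary to the Crank--Nicolson scheme of \cite{MR3166967}, the exponential integrator \eqref{expInt} does not conserve the $\L^2$-norm. This is precisely what the event $G_R$ and the inductive triangle-inequality argument are designed to handle: smallness of the truncated error (Theorem~\ref{thm-strong}) together with the a priori boundedness of the exact solution keeps the truncated numerical trajectory below the cut-off level, which is exactly the regime where it agrees with the genuine scheme. A secondary, bookkeeping difficulty is the interplay of the parameters $R$, $h$ and $C$, since both $C_R$ and $h_0(R)$ degenerate as $R\to\infty$: one has to fix $R$ depending on the target probability before sending $C\to\infty$.
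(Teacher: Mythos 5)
Your proof is correct, and it follows the same overall localization strategy as the paper: truncate the nonlinearity, apply Theorem~\ref{thm-strong} to the truncated problem, and identify the truncated and untruncated objects on a high-probability event before combining a union bound with Markov's inequality. The one genuinely different ingredient is the mechanism that keeps the numerical trajectory below the cut-off level so that $X^n=X_R^n$ up to $N_\tau$. The paper introduces the first index $n_\varepsilon$ at which the error exceeds $\varepsilon$, bounds $\norm{X^n}_1$ for $n<n_\varepsilon$ by the triangle inequality against the exact solution, and then performs one explicit step of the scheme, $\norm{X^{n_\varepsilon}}_1\le\norm{X^{n_\varepsilon-1}}_1+\kappa h\norm{X^{n_\varepsilon-1}}_1^3\le 4R_0$, which requires the extra smallness condition $h<3/(R_0^2\kappa)$ and exploits the explicit form of \eqref{expInt}. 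You instead build the smallness of the \emph{truncated} error directly into the good event $G_R$, paying an extra Chebyshev term $4^pC_R h^p/R^p$ in the union bound, and deduce $\norm{X_R^n}_{\H^1}\le\sqrt R$ for all $n\le N_\tau$ by the triangle inequality alone (your first-exceedance framing is not even needed: the bound holds directly at every such $n$ once $\tau<\tau_R$ is known). This avoids the one-step a priori estimate and the constraint on $h$ relative to $R_0$, and would apply verbatim to any one-step method for which the analogue of Theorem~\ref{thm-strong} holds; the price is the additional $h$-dependent term, which is harmless. The quantifier bookkeeping — fix $R$ from the a.s. boundedness of $\sup_{t\le\tau}\norm{X(t)}_{\H^1}$, then $h_0(R)$, then send $C\to\infty$ uniformly in $h<h_0$ — is handled the same way in both arguments.
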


\begin{proof}
For $R>0$, let us denote by $X_R$, resp. $X_R^n$, the exact, resp. numerical, solutions to the stochastic Manakov equation 
\eqref{mildManakov} with a truncated nonlinearity $F_R$.
We denote by $\kappa$ a positive constant
such that for all $Y\in \H^1$, $\norm{|Y|^2 Y}_1\leq \kappa\norm{Y}^3_1$.

Fix $X^0\in\H^6$, $T>0$, $\varepsilon>0$.
Let $\tau$ be a stopping time such that a.s. $\tau<\tau^*\wedge T$.
By Theorem~\ref{thmExact}, there exists an $R_0>1$ such that 
$\displaystyle\mathbb{P}\left(\sup_{t\in[0,\tau]}\norm{X(t)}_1\geq R_0-1\right)\leq\varepsilon/2$.
We have the inclusion
\begin{align*}
\left\{ \max_{0\leq n\leq N_\tau}\norm{X^n-X(t_n)}_1\geq\varepsilon  \right\}&\subset \left\{ \max_{0\leq n\leq N_\tau}\norm{X(t_n)}_1\geq R_0-1 \right\} \\
&\cup \left( \left\{ \max_{0\leq n\leq N_\tau}\norm{X^n-X(t_n)}_1\geq\varepsilon \right\} \cap \left\{ \max_{0\leq n\leq N_\tau}\norm{X(t_n)}_1<R_0-1 \right\} \right).
\end{align*}
Taking probabilities, we obtain
\begin{align*}
\PP\left( \left\{ \max_{0\leq n\leq N_\tau}\norm{X^n-X(t_n)}_1\geq\varepsilon  \right\} \right)\leq\varepsilon/2+
\PP\left( \left\{ \max_{0\leq n\leq N_\tau}\norm{X^n-X(t_n)}_1\geq\varepsilon \right\} \cap \left\{ \max_{0\leq n\leq N_\tau}\norm{X(t_n)}_1<R_0-1 \right\} \right).
\end{align*}
In order to estimate the terms on the right-hand side, we define the random variable
$n_\varepsilon:=\min\{n\in\{0,\dots,N_\tau\}\colon \norm{X^n-X(t_n)}_1\geq\varepsilon\}$,
with the convention that $n_\varepsilon=N_\tau+1$ if the set is empty.
If $\displaystyle\max_{0\leq n\leq N_\tau}\norm{X(t_n)}_1<R_0-1$ then we have by triangle inequality
$$
\max_{0\leq n\leq n_\varepsilon-1}\norm{X^n}_1
=\max_{0\leq n\leq n_\varepsilon-1}\norm{X^n-X(t_n)+X(t_n)}_1\leq\varepsilon+R_0-1\leq R_0.
$$
By definition of the exponential integrator \eqref{expInt},
for $h<\frac{3}{R_0^2{\kappa}}$, we have
\begin{equation*}
  \norm{X^{n_\varepsilon}}_1
  \leq \norm{X^{n_\varepsilon-1}}_1+h\norm{|X^{n\varepsilon-1}|^2X^{n_\varepsilon-1}}_1
  \leq R_0+\kappa h\norm{X^{n_\varepsilon-1}}_1^3\leq R_0+\kappa h R_0^3\leq 4 R_0,
\end{equation*}
in this case and thus $X^n=X_{4R_0}^n$ for $0\leq n\leq n_\varepsilon$.

If $n_\varepsilon\leq N_\tau$, then $\norm{X^{n_\varepsilon}_{4R_0}-X_{4R_0}(t_{n_\varepsilon})}_1\geq\varepsilon$ thanks to the definition of $n_\varepsilon$. 
Therefore we get $\displaystyle\max_{0\leq n\leq N_\tau}\norm{X^n_{4R_0}-X_{4R_0}(t_n)}_1\geq\varepsilon$. 
Furthermore, by definition of $n_\varepsilon$, we have 
$\displaystyle\left\{ \max_{0\leq n\leq N_\tau}\norm{X^n-X(t_n)}_1\geq\varepsilon \right\}\cap\left\{ n_\varepsilon>N_\tau \right\}=\varnothing$. 
We then deduce that 
$$
\left\{ \max_{0\leq n\leq N_\tau}\norm{X^n-X(t_n)}_1\geq\varepsilon \right\}=\left\{ \max_{0\leq n\leq N_\tau}\norm{X^n-X(t_n)}_1\geq\varepsilon \right\}
\cap\left\{ n_\varepsilon\leq N_\tau \right\}.
$$
Combining the above, using Markov's inequality as well as the strong error estimates from Theorem~\ref{thm-strong}, since $\tau<T$ a.s., there exists $C>0$ such that
\begin{align*}
&\PP\left( \max_{0\leq n\leq N_\tau}\norm{X^n-X(t_n)}_1\geq\varepsilon, n_\varepsilon\leq N_\tau,\max_{0\leq n\leq N_\tau}\norm{X(t_n)}_1<R_0-1 \right)\\
&\leq\PP\left( \max_{0\leq n\leq N_\tau}\norm{X_{4R_0}^n-X_{4R_0}(t_n)}_1\geq\varepsilon \right)
\leq\frac1{\varepsilon^{2p}}\E\left[ \max_{0\leq n\leq N_\tau}\norm{X^n_{4R_0}-X_{4R_0}(t_n)}_1^{2p} \right]\leq\frac1{\varepsilon^{2p}}Ch^p.
\end{align*}
This last term is smaller than $\varepsilon/2$ for $h$ small enough. 
All together we obtain
$$
\PP\left( \max_{0\leq n\leq N_\tau}\norm{X^n-X(t_n)}_1\geq\varepsilon \right)\leq\frac\varepsilon2+\frac\varepsilon2=\varepsilon
$$
and thus convergence in probability. 

To get the order of convergence in probability,
we choose $R_1\geq R_0-1$ such that for all $h>0$ small enough, 
$\displaystyle\PP\left( \max_{0\leq n\leq N_\tau}\norm{X^n}_1\geq R_1\right)\leq\frac\varepsilon2$.
As above, for all positive real number $C$, we have 
\begin{align*}
\left\{ \max_{0\leq n\leq N_\tau}\norm{X^n-X(t_n)}_1\geq Ch^{1/2} \right\}&\subset \left\{\max_{0\leq n\leq N_\tau}\norm{X(t_n)}_1\geq R_1 \right\}\\
&\cup\left( \left\{ \max_{0\leq n\leq N_\tau}\norm{X^n-X(t_n)}_1\geq Ch^{1/2} \right\}\cap \left\{\max_{0\leq n\leq N_\tau}\norm{X(t_n)}_1<R_1\right\} \right).
\end{align*}
Taking probabilities and using Markov's inequality as well as the strong error estimate from Theorem~\ref{thm-strong}, we obtain  
\begin{align*}
\PP\left( \left\{ \max_{0\leq n\leq N_\tau}\norm{X^n-X(t_n)}_1\geq Ch^{1/2} \right\} \right)&\leq\frac\varepsilon2+
\PP\left( \left\{ \max_{0\leq n\leq N_\tau}\norm{X_{4R_1}^n-X_{4R_1}(t_n)}_1\geq Ch^{1/2} \right\} \right)\\
&\leq\frac\varepsilon2+\frac{K(4R_1,\gamma,T,p,\norm{X_0}_6)}{C^{2p}},
\end{align*}
since $\tau\leq T$ almost surely.
For $C$ large enough, we infer
$$
\PP\left( \left\{ \max_{0\leq n\leq N_\tau}\norm{X^n-X(t_n)}_1\geq Ch^{1/2} \right\} \right)\leq\frac\varepsilon2+\frac\varepsilon2=\varepsilon,
$$
uniformly for $h<h_0$.
%{\color{blue} DEBUT \`A partir d'ici, je vois pas \`a quoi \c ca sert
%et je supprimerais.} On the other hand, if $h\geq h_0$ we still obtain 
%\begin{align*}
%\PP\left( \left\{ \max_{0\leq n\leq N_\tau}\norm{X^n-X(t_n)}_1\geq Ch^{1/2} \right\} \right)&\leq
%\PP\left( \left\{ \max_{0\leq n\leq N_\tau}\norm{X^n-X(t_n)}_1\geq Ch_0^{1/2} \right\} \right)\\
%&\leq\PP\left( \max_{0\leq n\leq N_\tau}\norm{X^n}_1\geq C\frac{h_0^{1/2}}2 \right)+\PP\left( \max_{0\%leq n\leq N_\tau}\norm{X(t_n)}_1\geq C\frac{h_0^{1/2}}2 \right)\leq \varepsilon
%\end{align*}
%for $C$ large enough ($C\geq\frac{2R_1}{h_0^{1/2}}$ for instance).
%{\color{blue} FIN} \coco{en fait ca dit qu'on peut aussi prendre $h>h_0$ donc on aurait des bornes uniformes en $h$}
Finally, the order of convergence in probability of the exponential integrator is $1/2$.
\end{proof}
Using the results above, one arrives at the following proposition, which establishes
that the scheme has almost sure convergence order $1/2^-$.
\begin{proposition}
  Under the assumptions of Proposition~\ref{propProba}, for all $\delta\in\left(0,\frac12\right)$
  and $T>0$,
  there exists a random variable $K_\delta(T)$ such that for all stopping time $\tau$ with $\tau<\tau^*\wedge T$, we have
$$
\max_{n=0,\ldots, N_\tau}\norm{X^n(\omega)-X(t_n,\omega)}_{\H^1}\leq K_\delta(T,\omega)h^\delta\quad\quad\quad\mathbb{P}-a.s.,
$$
for $h>0$ small enough.
\end{proposition}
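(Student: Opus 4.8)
The plan is to bootstrap the \emph{strong} rate for the truncated equation (Theorem~\ref{thm-strong}) into an almost sure rate by combining Markov's inequality with the Borel--Cantelli lemma, and then to pass from the truncated to the genuine cubic equation by the same localization (first--passage) argument used in the proof of Proposition~\ref{propProba}. Throughout I would let $h$ range over the admissible step sizes $h=T/N$, $N\in\N$. Fix $\delta\in(0,1/2)$ and choose $p\geq1$ large enough that $p(1-2\delta)>1$. Let $\kappa>0$ be such that $\norm{|Y|^2Y}_1\leq\kappa\norm{Y}_1^3$ for all $Y\in\H^1$, and for each integer $m\geq1$ pick $R_m$ (e.g. $R_m=16m^2$) large enough that the truncated nonlinearity $F_{R_m}$ coincides with $F(Y)=|Y|^2Y$ on the ball $\{\norm{Y}_1\leq 4m\}$. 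Denote by $X_{R_m}$ and $X_{R_m}^n$ the exact and numerical solutions of the truncated problem started from $X^0\in\H^6$. Theorem~\ref{thm-strong} then provides $C_m=C(F_{R_m},\gamma,T,p,\norm{X^0}_{\H^6})$ and $h_0(m)>0$ with
$$
\E\Bigl[\max_{0\le n\le N}\norm{X_{R_m}^n-X_{R_m}(t_n)}_1^{2p}\Bigr]\le C_m h^p,\qquad h=T/N<h_0(m),
$$
so that by Markov's inequality
$$
\PP\Bigl(\max_{0\le n\le N}\norm{X_{R_m}^n-X_{R_m}(t_n)}_1\ge h^\delta\Bigr)\le C_m h^{p(1-2\delta)}=C_m (T/N)^{p(1-2\delta)}.
$$
Since $p(1-2\delta)>1$ this is summable over $N$, so the Borel--Cantelli lemma yields an a.s. event $\Omega_m$ on which $\max_{0\le n\le N}\norm{X_{R_m}^n-X_{R_m}(t_n)}_1<(T/N)^\delta$ for all $N$ large enough.

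Next I would transfer this to the untruncated equation. Since a.s. $\tau<\tau^*\wedge T$, the exact solution lies in $C([0,\tau],\H^1)$ and hence $\rho:=\sup_{t\in[0,\tau]}\norm{X(t)}_1<\infty$ a.s.; consequently the events $A_m:=\{\rho<m-1\}$ exhaust $\Omega$ up to a null set. On $A_m$, and for $h$ small enough (depending only on $m$) that $h^\delta\le1$ and $\kappa h m^2<3$, the first--passage argument from the proof of Proposition~\ref{propProba} (run with the threshold $\varepsilon$ replaced by $h^\delta$, and truncation level $R_m$) shows that, up to the first index at which the error reaches $h^\delta$, the numerical solution stays bounded by $4m$, so that $X^n=X_{R_m}^n$ and $X(t_n)=X_{R_m}(t_n)$ there; combined with uniqueness for the truncated flow on $[0,\tau]$ this gives the inclusion
$$
\Bigl\{\max_{0\le n\le N_\tau}\norm{X^n-X(t_n)}_1\ge h^\delta\Bigr\}\cap A_m\ \subset\ \Bigl\{\max_{0\le n\le N}\norm{X_{R_m}^n-X_{R_m}(t_n)}_1\ge h^\delta\Bigr\},
$$
using $N_\tau\le N$ since $\tau<T$ a.s. Therefore, on the a.s. event $\Omega':=\bigl(\bigcap_m\Omega_m\bigr)\cap\bigl(\bigcup_m A_m\bigr)$, choosing $m=m(\omega)$ with $\omega\in A_m$, one gets $\max_{0\le n\le N_\tau}\norm{X^n-X(t_n)}_1<(T/N)^\delta=h^\delta$ for all $N$ large enough.

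To finish, I would absorb the finitely many remaining step sizes into a random constant: set $K_\delta(T,\omega)$ equal to the maximum of $1$ and the (finitely many, a.s. finite, since both solutions lie in $\H^1$) ratios $(T/N)^{-\delta}\max_{0\le n\le N_\tau}\norm{X^n-X(t_n)}_1$ over the small values of $N$; then $\max_{n=0,\dots,N_\tau}\norm{X^n(\omega)-X(t_n,\omega)}_1\le K_\delta(T,\omega)\,h^\delta$ $\PP$-a.s. for $h>0$ small enough, which is the claim. The main obstacle is precisely the passage from the truncated to the untruncated problem: the strong estimate is only available for the globally Lipschitz $F_{R_m}$, so one must run the localization argument carefully and \emph{uniformly in $m$}, making sure that all the ``$h$ small enough'' thresholds and the truncation levels $R_m$ depend only on $m$ (hence are random but finite) and that the cut-off is genuinely inactive on the ball reached by the numerical solution; once this is in place, the rest is a routine Markov plus Borel--Cantelli chaining over the countable family of step sizes $h=T/N$.
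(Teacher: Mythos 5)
Your proposal is correct and follows essentially the same route as the paper: strong rate for the truncated problem (Theorem~\ref{thm-strong}) plus Markov's inequality, a Borel--Cantelli chaining over the step sizes $h=T/N$ (which the paper delegates to Lemma~2.8 of \cite{MR1873517}), and the same first-passage localization argument to identify the truncated and untruncated solutions up to the random radius bounding the exact solution. Your explicit discretization of the random truncation level over integers $m$ (with the countable intersection of the events $\Omega_m$) is a slightly more careful write-up of a step the paper performs with a random radius $R_1(\omega)$, but it is not a different argument.
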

\begin{proof}
  Let $\tau$ be a stopping time such that $\tau<\tau^*\wedge T$ almost surely.
  Fix $R>0$, $p>1$ and $\delta\in(0,\frac12)$.
  Using the strong error estimate from Theorem~\ref{thm-strong} and Markov's inequality, one gets
  positive $h_0$ and $C$, which does not depend on $\tau$ itself, such that
  $$
  \forall h\in (0,h_0),\qquad
\PP\left(\max_{0\leq n\leq N_\tau}\norm{X_R^n-X_R(t_n)}_1>h^\delta\right)\leq Ch^{p(1-2\delta)}.
  $$
  Using \cite[Lemma~2.8]{MR1873517}, one then obtains that, choosing $p>1$ sufficiently
  large to ensure that $p(1-2\delta)>1$, 
  there exists a positive random variable $K_\delta(R,\gamma,T,p,\cdot)$ such that
  \begin{equation}\label{asest}
    \mathbb{P}-a.s., \quad \forall h\in(0,h_0),\qquad
  \max_{0\leq n\leq N_\tau}\norm{X_R^n-X_R(t_n)}_1 \leq K_\delta(R,\gamma,T,p,\omega)h^\delta.
\end{equation}
After this preliminary observation, we shall proceed as in the proof of Proposition~\ref{propProba}. 
We know that, since $\tau<\tau^*$ a.s., there exists a random variable $R_0$ such that
$$
\sup_{0\leq t\leq\tau}\norm{X(t)}_1\leq R_0(\omega)\quad\mathbb{P}-\text{a.s.}.
$$
Let now $\varepsilon\in(0,1)$ and $h$ small enough ($h\leq 3R_0^{-2}(\omega)\kappa^{-1}$). Assume by contradiction that 
$$
\max_{0\leq n\leq N_\tau}\norm{X^n-X(t_n)}_1\geq\varepsilon.
$$
Define $n_\varepsilon:=\min\{n\colon\norm{X^n-X(t_n)}_1\geq\varepsilon\}$. By definition of $R_0$ and $h$, we have that $\norm{X^n}_1\leq R_0$ a.s. for $0\leq n<n_\varepsilon-1$. 
Hence, $\norm{X^{n_\varepsilon}}_1\leq 4R_0$ and so the numerical solution equals to the numerical solution of the truncated equation 
$X^n=X_{4R_0}^n$ for $n=0,1,\ldots,n_\varepsilon$. 
We thus obtain that $\displaystyle\max_{0\leq n\leq N_\tau}\norm{X^n_{4R_0}-X_{4R_0}(t_n)}_1\geq\varepsilon$ for $h$ small enough. 
This contradicts \eqref{asest} with $R=4R_0$. Therefore, we have almost sure convergence. 

To get the order of almost sure convergence, we proceed similarly as in the proof of Proposition~\ref{propProba}. 
From the above, we have for $\omega$ in a set of probability one and all
$\varepsilon>0$, there exists $h_0>0$ such that for all $h\leq h_0$, 
$\displaystyle\max_{0\leq n\leq N_\tau}\norm{X^n-X(t_n)}_1\leq\varepsilon$. Thus, there exists $R_1(\omega)>R_0(\omega)$ such that 
$\displaystyle\max_{0\leq n\leq N_\tau}\norm{X^n}_1\leq R_1(\omega)$. 

If now $h\leq 3R_1^{-2}\kappa^{-1}:=h_0$, we obtain from \eqref{asest} that
\begin{align*}
\max_{0\leq n\leq N_\tau}\norm{X^n-X(t_n)}_1=\max_{0\leq n\leq N_\tau}\norm{X^n_{R_1}-X_{R_1}(t_n)}_1\leq K_\delta(R_1,\gamma,T,p,\omega)h^\delta.
\end{align*}
%On the other hand if $3R_0^{-2}\geq h>3R_1^{-2}$, we get 
%\begin{align*}
%\max_{0\leq n\leq N_\tau}\norm{X^n-X(t_n)}_1\leq 2R_1\leq\frac{2R_1}{h_0^\delta}h^\delta.
%\end{align*}
This shows that the order of a.s. convergence of the exponential integrator is $\frac12-$.
\end{proof}

\section{Numerical experiments}
This section presents various numerical experiments in order to illustrate the main properties of the exponential integrator \eqref{expInt}, denoted by \textsc{SEXP} below. 
We will compare this numerical scheme with the following ones: 
\begin{itemize}
\item The nonlinearly implicit Crank--Nicolson scheme from \cite{MR3166967}
\begin{equation}
\label{CN}\tag{CN}
X^{n+1}=X^n-H_{h,n}X^{n+1/2}+ihG(X^n,X^{n+1}),
\end{equation}
where $G(X^n,X^{n+1})=\frac12\left(|X^n|^2+|X^{n+1}|^2\right)X^{n+1/2}$ and $X^{n+1/2}=\frac{X^{n+1}+X^n}2$.
\item The Lie--Trotter splitting scheme presented in \cite{MR3166967}
\begin{equation}
\label{LT}\tag{LT}
X^{n+1}=U_{h,n+1}\left(X^n+i\int_{t_n}^{t_{n+1}}F(Y^n(s))\,\text ds\right),
\end{equation}
where we recall that $F(X)=|X|^2X$, $Y^n$ is the exact solution to the nonlinear differential
equation $i\text dY^n+F(Y^n)\,\text dt=0$ with the initial condition $Y^n(t_n)=X^n$. 
A convergence analysis of this time integrator is under investigation in \cite{bcd20}. 
\item The relaxation scheme presented in \cite{MR3166967}
\begin{equation}
\label{relax}\tag{Relax}
i\left(X^{n+1}-X^n\right)+H_{h,n}\left(\frac{X^{n+1}+X^n}{2} \right)+\Phi^{n+1/2}\left(\frac{X^{n+1}+X^n}{2} \right)=0,
\end{equation}
where $\Phi^{n+1/2}=2|X^n|^2-\Phi^{n-1/2}$ with $\Phi^{-1/2}=|X^0|^2$.
\end{itemize}
We will consider the SPDE \eqref{eq:Manakov} on an interval $[-a,a]$ with a sufficiently large $a>0$ with homogeneous Dirichlet boundary conditions. 
The spatial discretisation is done by centered finite differences with mesh size denoted by $\Delta x$. Unless stated otherwise, the initial condition for the SPDE is the soliton 
of the deterministic Manakov equation \cite{HASEGAWA2004241} given by
\begin{equation}
X^0=X(x,0)=\begin{pmatrix}
\cos(\theta/2)\exp({i\phi_1})\\
\sin(\theta/2)\exp({i\phi_2})
\end{pmatrix}
\eta\sech(\eta x)\exp({-i\kappa(x-\tau)+i\alpha}),
\end{equation}
with the parameters $\alpha=\tau=\phi_1=\phi_2=\kappa=0$ and $\theta=\pi/4$, $\eta=1$.

\subsection{Evolution plots}
In the first numerical experiment, we solve the stochastic Manakov equation\eqref{eq:Manakov} with $a=50$ on the time interval $[0,3]$ and 
discretisation parameters $h=3/625$ and $\Delta x=1/4$. Figure~\ref{fig:evo} displays the space-time evolution 
of the numerical intensities $|X^1|^2$ and $|X^2|^2$
along solutions given by the exponential integrator \eqref{expInt}. 
An energy exchange due to the stochastic perturbation 
and the nonlinearity can be observed. This produces small amplitude perturbations at the basis of the soliton, 
leading to the formation of further solitons. 

\begin{figure}[h]
\centering
\includegraphics*[width=0.48\textwidth,keepaspectratio]{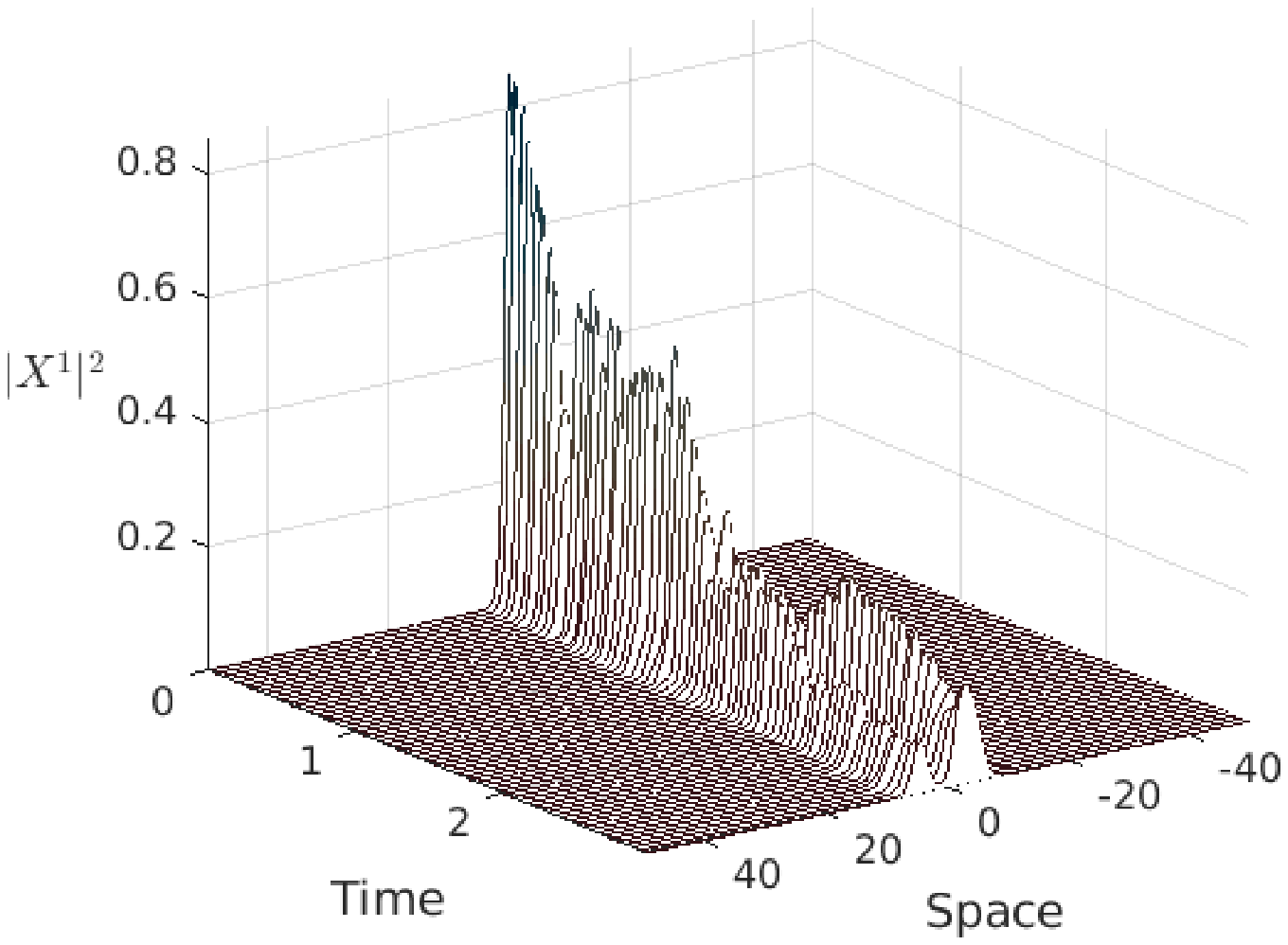}
\includegraphics*[width=0.48\textwidth,keepaspectratio]{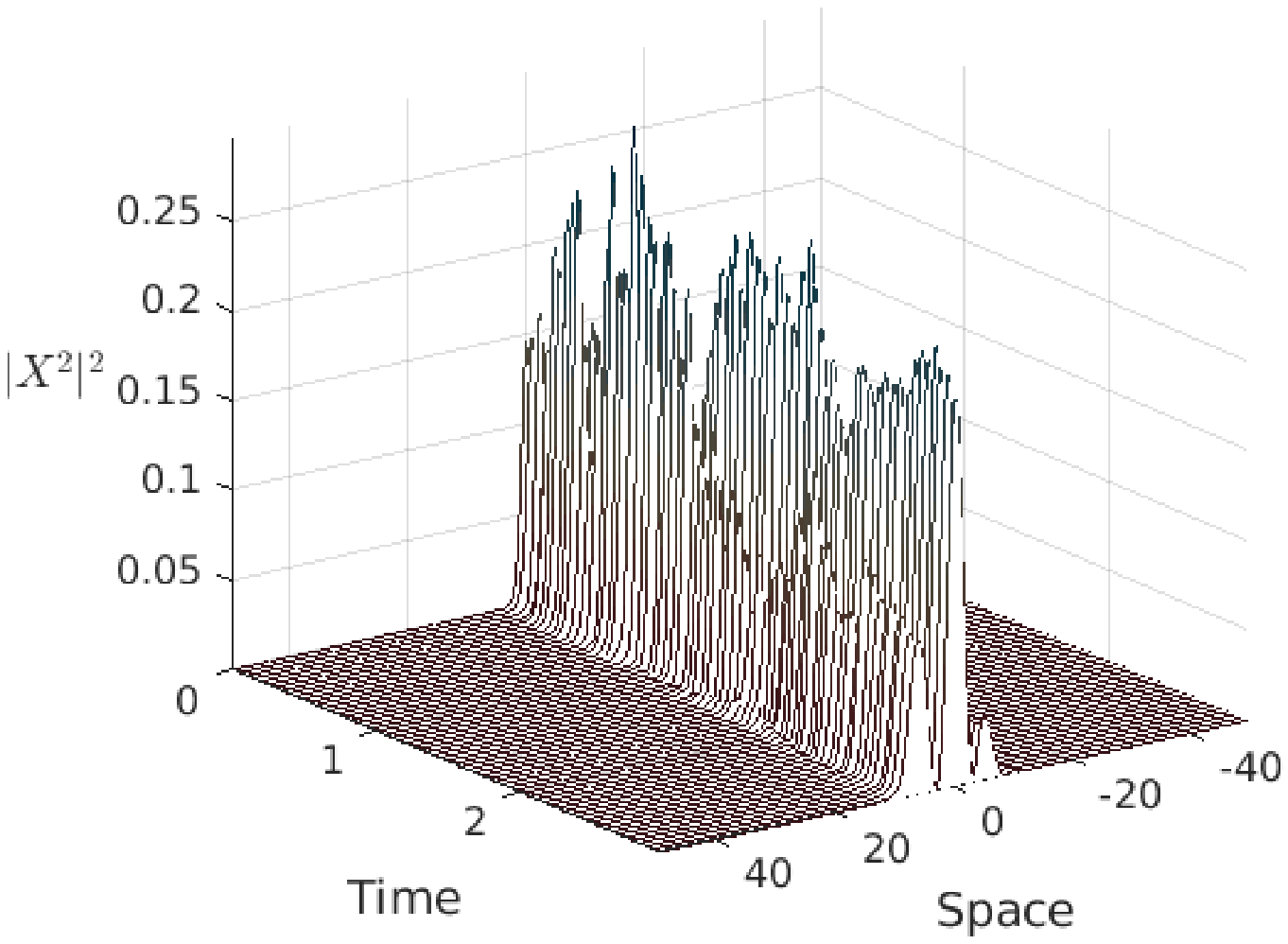}
\caption{Space-time evolution of the intensity of the first component (left) and the second component (right).}
\label{fig:evo}
\end{figure}

\subsection{Strong convergence}
In order to illustrate the strong rate of convergence of the exponential integrator \eqref{expInt} 
stated in Theorem~\ref{thm-strong}, we discretise the stochastic Manakov equation \eqref{eq:Manakov} 
with $a=50$ and mesh size $\Delta x=0.4$. 
%on the interval $[-50,50]$ in space with a mesh size $\Delta x=0.4$. 
We compute the errors 
$\E\left[\norm{X^N-X_{\text{ref}}(T)}_{\H^1}^2\right]$ at the time $T=1$ for time steps ranging from $h=2^{-13}$ to $h_{\text{ref}}=2^{-19}$ 
and report these in Figure~\ref{fig:strong}. The reference solution is computed using the exponential integrator 
and the expected values are approximated by computing averages over $M_s=250$ samples. 
We observed that using a larger number of samples ($M_s=500$) does not significantly improve 
the behaviour of the convergence plots (the results are not displayed). 

\begin{figure}[h]
\centering
\includegraphics*[height=7cm,keepaspectratio]{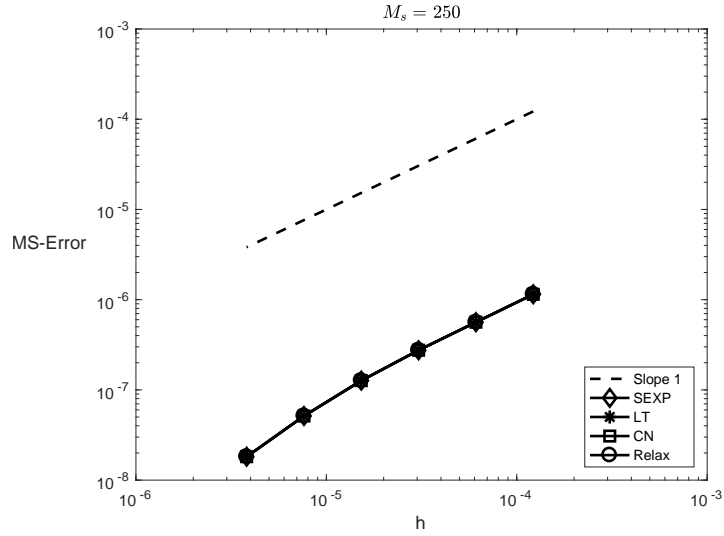}
\caption{Strong rates of convergence.}
\label{fig:strong}
\end{figure}

\subsection{Computational costs}
The goal of this numerical experiment is to compare the computational cost of the exponential integrator introduced in this paper to that 
of numerical methods from the literature. We run all numerical schemes over the time interval $[0,0.5]$ for the stochastic Manakov equation 
\eqref{eq:Manakov} with $\gamma=1$. 
We discretise the spatial domain with $a=50$, using a mesh of size $\Delta x=0.2$. 
We run $500$ samples for each numerical scheme. For each scheme and each sample, we run several time steps and compare the $\L^2$ error 
at the final time with a reference solution provided for the same sample by the same scheme for a very small time step $h=2^{-16}$. 
Figure~\ref{fig:compcos} displays the total computational time for all the samples, for each numerical scheme and each time step, as a function 
of the averaged final error. 
One observes that the performance of the Crank--Nicolson scheme is a little bit inferior than the performance for the other numerical schemes. 

\begin{figure}[h]
\centering
\includegraphics*[height=7cm,keepaspectratio]{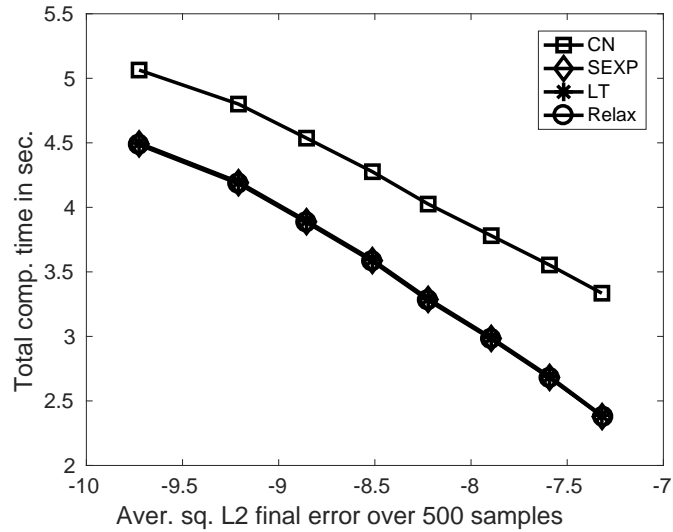}
\caption{Computational time as a function of the averaged final error for the four numerical methods.}
\label{fig:compcos}
\end{figure}

\subsection{Preservation of the $\L^2$-norm}
The next numerical experiment illustrates the preservation of the $\L^2$-norm along one sample path of the above numerical schemes. 
For this, we consider $a=50$, $\gamma=1$, time interval $[0,3]$ and discretisation parameters $h=0.006$ and $\Delta x=0.25$. 
The results are displayed in Figure~\ref{fig:L2}. Exact preservation of the $\L^2$-norm for the Crank--Nicolson, 
the Lie--Trotter and the relaxation schemes is observed. 
A small drift is observed for the exponential scheme. 

\begin{figure}[h]
\centering
\includegraphics*[height=7cm,keepaspectratio]{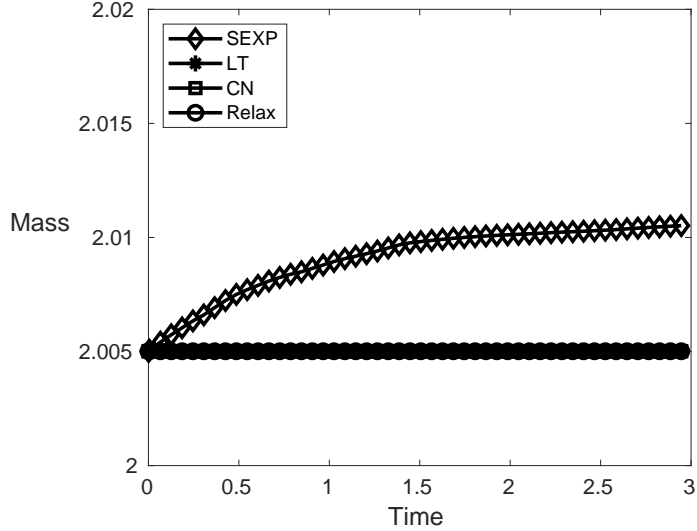}
\caption{Evolution of the $\L^2$-norm along numerical solutions ($h=0.006$ and $\Delta x=0.25$).}
\label{fig:L2}
\end{figure}

\subsection{$\L^2$-preserving exponential integrators}
As seen above, the proposed exponential integrator unfortunately does not preserve the $\L^2$-norm. 
This can be fixed using ideas from \cite{MR2413146,MR3736655}. 
We thus propose the following modified exponential method for the numerical discretisation
the stochastic Manakov equation \eqref{eq:Manakov} 
\begin{align}
\label{symexp}
F_*&=F\left(U_{h,n}X^n+i\frac{h}2F_*\right)\nonumber\\
X^{n+1}&=U_{h,n}X^n+ihF_*,\tag{modEXP}
\end{align}
where we define $F(X)=|X|^2X$ for the nonlinearity.

As seen in the introduction, the exact solution to the stochastic Manakov equation \eqref{eq:Manakov} 
preserves the $\L^2$-norm. The following proposition states that the modified exponential method enjoys 
the same property. 
\begin{proposition}
The exponential integrator \eqref{symexp} preserves the $\L^2$-norm. 
\end{proposition}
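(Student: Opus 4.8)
The plan is to show that the map $U_{h,n}$ appearing in \eqref{symexp} is an isometry on $\L^2$, and that the nonlinear correction $ihF_*$ preserves the $\L^2$-norm when added to $U_{h,n}X^n$ in the symmetric fashion dictated by the scheme. First I would recall (from the discussion following \eqref{expInt} and Appendix~\ref{appd}) that $U_{h,n}=\left(Id+\tfrac12 H_{h,n}\right)^{-1}\left(Id-\tfrac12 H_{h,n}\right)$ is unitary on $\L^2$: indeed $H_{h,n}=-ihI_2\partial_x^2+\sqrt{\gamma h}\sum_{k}\sigma_k\chi_k^n\partial_x$ is skew-adjoint on $\L^2(\R;\C^2)$ (the operator $i\partial_x^2$ is skew-adjoint, and each $\sigma_k\partial_x$ is skew-adjoint since $\sigma_k$ is Hermitian and $\partial_x$ is skew-adjoint), so the Cayley transform of $H_{h,n}$ is unitary. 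Hence $\norm{U_{h,n}Y}_{\L^2}=\norm{Y}_{\L^2}$ for every $Y\in\L^2$.

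Next I would exploit the specific algebraic structure of \eqref{symexp}. Write $Y:=U_{h,n}X^n$, so that $F_*=F\!\left(Y+i\tfrac h2 F_*\right)$ and $X^{n+1}=Y+ihF_*$. Introduce the midpoint $Z:=Y+i\tfrac h2 F_*=\tfrac12\left(Y+X^{n+1}\right)$; then $F_*=F(Z)=|Z|^2 Z$ pointwise in $x$. The key computation is
\begin{equation*}
\norm{X^{n+1}}_{\L^2}^2-\norm{Y}_{\L^2}^2
=\norm{Y+ihF_*}_{\L^2}^2-\norm{Y}_{\L^2}^2
=2h\,\bigl(Y,\,iF_*\bigr)_2+h^2\norm{F_*}_{\L^2}^2,
\end{equation*}
using bilinearity of the real scalar product $(\cdot,\cdot)_2$. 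Now substitute $Y=Z-i\tfrac h2 F_*$ to get $\bigl(Y,iF_*\bigr)_2=\bigl(Z,iF_*\bigr)_2-\tfrac h2\bigl(i F_*,iF_*\bigr)_2=\bigl(Z,iF_*\bigr)_2-\tfrac h2\norm{F_*}_{\L^2}^2$, so that the two $h^2$-terms cancel and
\begin{equation*}
\norm{X^{n+1}}_{\L^2}^2-\norm{Y}_{\L^2}^2=2h\,\bigl(Z,\,iF_*\bigr)_2=2h\,\bigl(Z,\,i|Z|^2Z\bigr)_2.
\end{equation*}
The integrand of $\bigl(Z,i|Z|^2Z\bigr)_2$ is $\sum_{j}\mathrm{Re}\bigl(Z_j\,\overline{i|Z|^2 Z_j}\bigr)=|Z|^2\sum_j\mathrm{Re}\bigl(-i|Z_j|^2\bigr)=0$ pointwise, hence $\bigl(Z,i|Z|^2Z\bigr)_2=0$. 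Therefore $\norm{X^{n+1}}_{\L^2}=\norm{Y}_{\L^2}=\norm{U_{h,n}X^n}_{\L^2}=\norm{X^n}_{\L^2}$, which is the claim.

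The main obstacle is not any single estimate but rather making the two preliminary facts fully rigorous: one must justify that $F_*$ is well-defined (existence of a fixed point of $F_*\mapsto F\!\left(Y+i\tfrac h2 F_*\right)$ in $\L^2$, or in $\H^1$, for $h$ small enough, which follows from a contraction argument given the boundedness/Lipschitz properties of the truncated nonlinearity, or locally for the cubic $F$), and that $U_{h,n}$ genuinely maps $\L^2$ (and $\H^1$) into itself as an isometry — this is exactly the content invoked in Appendix~\ref{appd} and in \cite[Proposition~2.2]{MR3166967}, so I would simply cite it. Once those are in hand, the norm preservation is the short algebraic cancellation above, driven entirely by the pointwise identity $\mathrm{Re}\bigl(\overline{Z_j}\,i|Z|^2Z_j\bigr)=0$ and the symmetric (midpoint) placement of the nonlinearity in \eqref{symexp}.
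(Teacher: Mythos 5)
Your proof is correct and follows essentially the same route as the paper: unitarity of $U_{h,n}$, expansion of $\norm{U_{h,n}X^n+ihF_*}_{\L^2}^2$, the midpoint substitution that cancels the $h^2$-terms, and the orthogonality $(Z,iF(Z))_2=0$. The only (welcome) differences are that you verify the final orthogonality by the explicit pointwise identity $\mathrm{Re}\bigl(\overline{Z_j}\,i|Z|^2Z_j\bigr)=0$ rather than invoking the $\L^2$-invariance of the original problem, and that you flag the well-posedness of the implicit equation for $F_*$, which the paper leaves implicit.
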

\begin{proof}
By definition of the exponential integrator and using the isometry property of the discrete random propagator $U_{h,0}$, one obtains
$$
\norm{X^1}^2=\norm{U_{h,0}X^0+ihF_*}^2=\norm{X^0}^2+h^2\norm{F_*}^2+h(U_{h,0}X^0,iF_*)_2+h(iF_*,U_{h,0}X^0)_2.
$$
Setting $Y:=U_{h,0}X^0+i\frac{h}2F_*$ or $U_{h,0}X^0=(Y-i\frac h2F_*)$, one gets 
\begin{align*}
\norm{X^1}^2&=\norm{X^0}^2+h^2\norm{F_*}^2+h(Y-i\frac h2F_*,iF_*)_2+h(iF_*,Y-i\frac h2F_*)_2\\
&=\norm{X^0}^2+h^2\norm{F_*}^2+h\left((Y,iF_*)_2+(iF_*,Y)_2\right)-\frac{h^2}2\norm{F_*}-\frac{h^2}2\norm{F_*}\\
&=\norm{X^0}^2+2h\mathrm{Re}\left((Y,iF_*)_2\right)=\norm{X^0}^2+0=\norm{X^0}^2
\end{align*}
since the $\L^2$-norm is an invariant for the original problem and $F_*=F(Y)$.
\end{proof}
We now numerically illustrate this property with the same parameters as in the previous numerical experiment. 
Figure~\ref{fig:L2b} shows the exact preservation of the $\L^2$-norm by the exponential scheme \eqref{symexp}. 

\begin{figure}[h]
\centering
\includegraphics*[height=7cm,keepaspectratio]{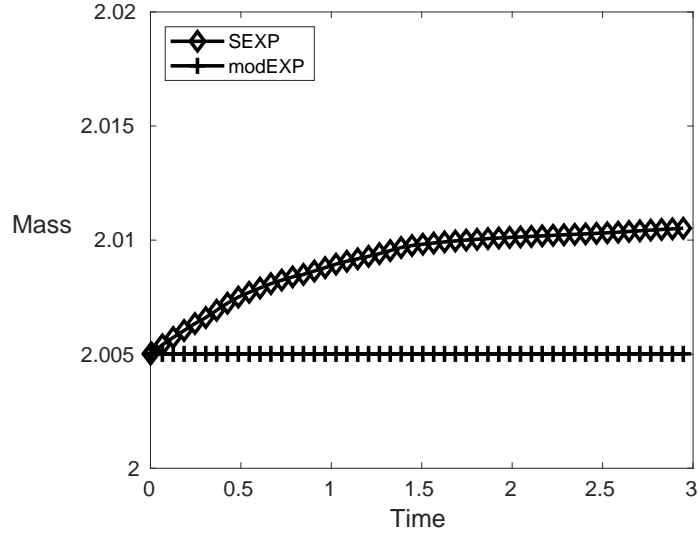}
\caption{Evolution of the $\L^2$-norm along numerical solutions given by both exponential schemes ($h=0.006$ and $\Delta x=0.25$).}
\label{fig:L2b}
\end{figure}

It would be of interest to prove the orders of convergence of the $\L^2$-preserving exponential integrator \eqref{symexp}. 
This is however out of the scope of this publication since it seems that one would need 
to use other techniques than that used in the proofs of the proposed explicit exponential integrator
\eqref{expInt}. 

\section{Appendix}\label{appd}
We prove here that the operator $U_{h,n}$ defined after equation \eqref{expInt}
is an isometry from $\H^m(\R)$ to itself for all $h>0$ and all realization
of the random variable.

\begin{proposition}
  Let $m\in\N$, $(u_0,v_0)\in \H^m(\R)$, $h>0$, $\chi_1,\chi_2,\chi_3\in\R$ and define a distribution $(u_1,v_1)\in({\mathcal S}'(\R))^2$
  as the solution of
  \begin{equation}
    \label{eq:relationU}
  \begin{pmatrix}
    u_1\\ v_1
  \end{pmatrix}
  = U_{h,n}
  \begin{pmatrix}
    u_0\\ v_0
  \end{pmatrix}.
\end{equation}
One has $(u_1,v_1)\in \H^m(\R)$ and $\norm{(u_1,v_1)}_{\H^m(\R)}=\norm{(u_0,v_0)}_{\H^m(\R)}$.
\end{proposition}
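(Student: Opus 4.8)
The plan is to work on the Fourier side, where the operator $U_{h,n}$ becomes, for each fixed frequency $\xi\in\R$, multiplication by a fixed $2\times 2$ matrix, and to show that this matrix is unitary. First I would recall that $H_{h,n}=-ihI_2\partial_x^2+\sqrt{\gamma h}\sum_{k=1}^3\sigma_k\chi_k^n\partial_x$, so after applying the Fourier transform (in $x$), $H_{h,n}$ acts as multiplication by the matrix $M(\xi)=ih\xi^2 I_2+i\sqrt{\gamma h}\,\xi\sum_{k=1}^3\sigma_k\chi_k^n$. The key algebraic observation is that $M(\xi)$ is skew-Hermitian: indeed $ih\xi^2 I_2$ is skew-Hermitian, and since each Pauli matrix $\sigma_k$ is Hermitian and $\chi_k^n,\xi\in\R$, the matrix $i\sqrt{\gamma h}\,\xi\sum_k\sigma_k\chi_k^n$ is also skew-Hermitian; hence $M(\xi)^*=-M(\xi)$.

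Next I would observe that, on the Fourier side, $U_{h,n}$ acts as multiplication by the Cayley transform
$$
c(\xi)=\left(I_2+\tfrac12 M(\xi)\right)^{-1}\left(I_2-\tfrac12 M(\xi)\right).
$$
I must first check this is well-defined: since $M(\xi)$ is skew-Hermitian its eigenvalues are purely imaginary, so $I_2+\tfrac12 M(\xi)$ is invertible for every $\xi$ (its eigenvalues are of the form $1+\tfrac12 i\mu$ with $\mu\in\R$, hence nonzero), and in fact $\left\|\left(I_2+\tfrac12 M(\xi)\right)^{-1}\right\|\leq 1$ uniformly in $\xi$. The standard fact that the Cayley transform of a skew-Hermitian matrix is unitary then gives $c(\xi)^*c(\xi)=I_2$: one computes $c(\xi)^*c(\xi)=\left(I_2-\tfrac12 M^*\right)\left(I_2+\tfrac12 M^*\right)^{-1}\left(I_2+\tfrac12 M\right)^{-1}\left(I_2-\tfrac12 M\right)$, and using $M^*=-M$ together with the commutation of $I_2-\tfrac12 M$, $I_2+\tfrac12 M$ and their inverses (all are polynomials in $M$), this collapses to $I_2$. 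In particular $|c(\xi)\hat w(\xi)|=|\hat w(\xi)|$ for every $\hat w(\xi)\in\C^2$ and every $\xi$.

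Finally I would assemble the $\H^m$ statement by Plancherel. By the previous paragraph, if $(\widehat{u_1},\widehat{v_1})(\xi)=c(\xi)(\widehat{u_0},\widehat{v_0})(\xi)$ then $|(\widehat{u_1},\widehat{v_1})(\xi)|=|(\widehat{u_0},\widehat{v_0})(\xi)|$ pointwise, so multiplying by the weight $(1+\xi^2)^m$, integrating in $\xi$, and using the characterisation $\norm{(u,v)}_{\H^m(\R)}^2=\int_\R (1+\xi^2)^m\,|(\widehat u,\widehat v)(\xi)|^2\,\dd\xi$ (up to a fixed normalisation constant), we get simultaneously that $(u_1,v_1)\in\H^m(\R)$ and $\norm{(u_1,v_1)}_{\H^m(\R)}=\norm{(u_0,v_0)}_{\H^m(\R)}$; the case $m=0$ already shows $(u_1,v_1)\in\L^2$, which upgrades the a priori assumption $(u_1,v_1)\in(\mathcal S'(\R))^2$ to a genuine function, and the general $m$ follows identically. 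The main (mild) obstacle is purely bookkeeping: being careful that all the matrices appearing — $M(\xi)$, $I_2\pm\tfrac12 M(\xi)$ and their inverses — commute with one another (they are all polynomials in the single matrix $M(\xi)$), which is exactly what makes the Cayley-transform identity $c(\xi)^*c(\xi)=I_2$ go through; beyond that the argument is routine.
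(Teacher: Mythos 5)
Your proof is correct and follows essentially the same route as the paper: pass to the Fourier side, observe that $U_{h,n}$ acts at each frequency $\xi$ by the Cayley transform of the skew-Hermitian matrix $M(\xi)$, conclude that this $2\times 2$ symbol is unitary, and integrate the pointwise identity $|(\widehat{u_1},\widehat{v_1})(\xi)|=|(\widehat{u_0},\widehat{v_0})(\xi)|$ against the Sobolev weight. The only cosmetic difference is that the paper establishes unitarity of the symbol by diagonalising the Hermitian matrix $S(\xi)$ (so that the diagonal entries $\frac{1-i\lambda_j(\xi)/2}{1+i\lambda_j(\xi)/2}$ visibly have modulus one), whereas you verify $c(\xi)^\star c(\xi)=I_2$ directly using commutativity of polynomials in $M(\xi)$; both are standard and equally valid.
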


\begin{proof}
The operator $H_{h,n}$ defined after \eqref{expInt} acts on the Fourier
transform of the couples of functions at frequency $\xi\in\R$ {\it via}
the complex-valued $2\times 2$ matrix
$ih\xi^2 I_2 + i\xi\sqrt{\gamma h} (\chi_1\sigma_1+\chi_2\sigma_2+\chi_3\sigma_3)$.
This matrix reads $iS(\xi)$ where $S(\xi)$ is an hermitian $2\times 2$ matrix.
Therefore, the matrix $S(\xi)$ is diagonalizable in an orthonormal basis of $\C^2$ with
real eigenvalues $\lambda_1(\xi)$ and $\lambda_2(\xi)$. We infer that there exists
a unitary matrix $P(\xi)$ such that $S(\xi)=P(\xi)^\star D(\xi) P(\xi)$,
where $D(\xi)$ is the
diagonal matrix with $\lambda_1(\xi)$ and $\lambda_2(\xi)$ on the diagonal.
Hence, relation \eqref{eq:relationU} is equivalent to
\begin{equation*}
  \forall \xi\in\R,\qquad
  P(\xi)
  \begin{pmatrix}
    \hat u_1(\xi)\\ \hat v_1 (\xi)
  \end{pmatrix}
  =
  \begin{pmatrix}
    \frac{1-i\lambda_1(\xi)/2}{1+i\lambda_1(\xi)/2} & 0 \\
    0 & \frac{1-i\lambda_2(\xi)/2}{1+i\lambda_2(\xi)/2}
  \end{pmatrix}
  P(\xi)
  \begin{pmatrix}
    \hat u_0(\xi)\\ \hat v_0 (\xi)
  \end{pmatrix}.
\end{equation*}
Since the diagonal elements in the diagonal matrix above have modulus 1
and $P(\xi)$ is unitary,
we infer that
\begin{equation*}
  \forall \xi\in\R,\qquad
  |\hat u_1(\xi)|^2 + |\hat v_1(\xi)|^2 = |\hat u_0(\xi)|^2 + |\hat v_0(\xi)|^2.
\end{equation*}
This proves that $(u_1,v_1)\in \H^m(\R)$ since $(u_0,v_0)\in \H^m(\R)$,
and the $\H^m(\R)$-norm of these two couples of functions is the same.
\end{proof}

% \section*{Acknowledgements}
% We appreciate the referees' comments on an earlier version of the paper. 
% The authors would like to thank ?? for interesting discussions. 
% This work was partially supported 
% by UMIT Research Lab at Ume{\aa} University, the Swedish Research Council (VR) (project nr. $2018-04443$), 
% FR\"O the mobility programs of the French Embassy/Institut fran\c{c}ais de Su\`ede, and INRIA Lille Nord-Europe. 
% The computations were performed on resources provided by 
% the Swedish National Infrastructure for Computing (SNIC) 
% at HPC2N, Ume{\aa} University. 

\bibliographystyle{plain}
\bibliography{biblio}

\end{document}